\newcommand{\mbb}[1]{\ensuremath \mathbb{#1}}
\newcommand{\mbf}[1]{\ensuremath \mathbf{#1}}
\newcommand{\mcl}[1]{\ensuremath \mathcal{#1}}
\newcommand{\msc}[1]{\ensuremath \mathscr{#1}}
\newcommand{\mrm}[1]{\ensuremath \mathrm{#1}}
\newcommand{\R}{\mathbb{R}}
\newcommand{\Z}{\mathbb{Z}}
\DeclareMathOperator{\SL}{SL}
\DeclareMathOperator{\SO}{SO}
\DeclareMathOperator{\Nil}{Nil}
\DeclareMathOperator{\id}{id}
\DeclareMathOperator{\Rm}{Rm}
\DeclareMathOperator{\Rc}{Rc}
\DeclareMathOperator{\scalar}{scal}
\DeclareMathOperator{\inj}{inj}
\DeclareMathOperator{\tr}{tr}
\DeclareMathOperator{\can}{can}
\DeclareMathOperator{\diam}{diam}
\DeclareMathOperator{\Hess}{Hess}
\newcommand{\fn}[3]{\nolinebreak{#1 \colon #2 \rightarrow #3}}
\newcommand{\fnl}[3]{\nolinebreak{#1 \colon #2 \longrightarrow #3}}
\newcommand{\fnla}[3]{#1 \colon #2 &\longrightarrow #3} 
\newcommand{\map}[2]{\nolinebreak{#1 \rightarrow #2}}
\newcommand{\mapl}[2]{#1 \longrightarrow #2}
\newcommand{\mapeltsla}[2]{#1 &\longmapsto #2}
\newcommand{\fndef}[5]{\begin{align*}\fnla{#1}{#2}{#3} \\ \mapeltsla{#4}{#5} \end{align*}}
\newcommand{\setv}[2]{\nolinebreak{ \{#1\hphantom{,}|\hphantom{,}#2\}}}
\newcommand{\setvbigl}[2]{\nolinebreak{ \left\{ \left. #1\hphantom{,} \right| \hphantom{,}#2 \right\}}}
\providecommand{\Scal}[1]{\left\langle #1\right\rangle}%
\newcommand{\smfrac}[2]{{\textstyle{\frac{#1}{#2}}}}
\numberwithin{equation}{section}
\theoremstyle{plain} 
\newtheorem{thm}[equation]{Theorem}
\newtheorem{lem}[equation]{Lemma}
\newtheorem{prop}[equation]{Proposition}
\theoremstyle{definition}
\newtheorem{defn}[equation]{Definition}
\theoremstyle{remark}
\newtheorem{rem}[equation]{Remark}
\newtheorem{ex}[equation]{Example}
\begin{document}

\author[M.~B.~Williams]{Michael Bradford Williams}
\address{Department of Mathematics, The University of California, Los Angeles}
\email{mwilliams@math.ucla.edu}
\urladdr{http://math.ucla.edu/~mwilliams}

\title{Results on coupled {R}icci and harmonic map flows}

\begin{abstract} 
We explore the harmonic-Ricci flow---that is, Ricci flow coupled with harmonic map flow---both as it arises naturally in certain principal bundle constructions related to Ricci flow and as a geometric flow in its own right.  We demonstrate that one natural geometric context for the flow is a special case of the locally $\mathbb{R}^N$-invariant Ricci flow of Lott, and provide examples of gradient solitons for the flow.  We prove a version of Hamilton's compactness theorem for the  flow, and then generalize it to the category of \'{e}tale Riemannian groupoids.  Finally, we provide a detailed example of solutions to the flow on the Lie group $\Nil^3$.
\end{abstract}

\subjclass[2010]{53C25, 53C44}


\maketitle
\tableofcontents

\section{Introduction}\label{sec:intro}

As the Ricci flow preserves isometry and holonomy groups \cites{Kotschwar2010,Kotschwar2011}, spaces with extra symmetries or additional structure are important in the study of this flow.  Such spaces include homogeneous and symmetric spaces, including Lie groups with left-invariant metrics, K\"{a}hler and Sasakian manifolds, warped products and rotationally symmetric spaces, and various types of vector and principal bundles.  In recent years, a number of authors have considered Ricci flow on manifolds with certain types of additional structure, with the result being a coupled system that involves Ricci flow and other geometric flows.  For example, Streets and Young independently discovered that Ricci flow on a principal bundle, with the assumption of fixed fiber volume, results in a coupling of Ricci flow on the base with Yang-Mills flow for the bundle connection \cites{Streets2007, Young2008}.  If $(\mcl{M},g)$ is a principal $\mcl{P}$-bundle with curvature 1-form $a$ and curvature $F$, then the flow is
\begin{equation}\label{eq:rym}
\begin{aligned}
\partial_t g &= -2 \Rc + F^2 \\
\partial_t a &= -d^* F
\end{aligned}
\end{equation}
Lott and Sesum studied Ricci flow on warped products with $\mcl{S}^1$ fibers, which (after modification by diffeomorphisms) becomes Ricci flow on the base coupled with heat flow for the warping function \cite{LottSesum2011}.  If $(\mcl{M} \times \mcl{S}^1,g+e^{2u} d\theta^2)$ is such a warped product, then the flow is
\begin{equation}\label{eq:wprf}
\begin{aligned}
\partial_t g &= -2 \Rc + 2 \, du \otimes du \\
\partial_t u &= \Delta u
\end{aligned}
\end{equation}
One can also consider a fiber of arbitrary dimension $m$.  Under certain curvature restrictions, the equations are the same, except with a factor of $m$ on the coupling term.  See \cites{Tran2012,Williams2012}.

Lott has also studied Ricci flow on a certain class of ``twisted'' principal $\mbb{R}^N$-bundles, where the flow becomes Ricci flow on the base, a Yang-Mills-type flow for the connection, and a heat-type flow for the fiber metrics \cite{Lott2010}.  Certain metrics on such bundles $\mbb{R}^N \hookrightarrow \mcl{M}^{n+N} \rightarrow \mcl{B}^n$ can be represented locally as $(g,A,G)$, where $g$ is a metric on the base, $A$ is an $\R^N$-valued $1$-form corresponding to a connection on $\mcl{M}$, and $G$ is an inner product on the fibers.  Ricci flow for these \textit{locally $\R^N$-invariant} metrics is
\begin{equation}\label{eq:inv-RF}
\begin{aligned}
\partial_t g_{\alpha\beta} &= -2R_{\alpha\beta} + \frac{1}{2} G^{ik} G^{j\ell} \nabla_{\alpha} G_{ij} \nabla_{\beta} G_{k\ell} 
+ g^{\gamma\delta} G_{ij} (dA)_{\alpha\gamma}^{i} (dA)_{\beta\delta}^{j},\\
\partial_t A_{\alpha}^{i} &= -(\delta dA)_{\alpha}^{i} + G^{ij} \nabla^{\beta} G_{jk} (dA)_{\beta\alpha}^{k} \\
\partial_t G_{ij} &= \Delta G_{ij} - G^{k\ell} \nabla_{\alpha} G_{ik} \nabla^{\alpha} G_{\ell j} - \frac{1}{2} g^{\alpha\gamma} 
g^{\beta\delta} G_{ik} G_{j\ell} (dA)_{\alpha\beta}^{k} (dA)_{\gamma\delta}^{\ell}.
\end{aligned}
\end{equation}

On the other hand, one might consider the coupling of Ricci flow with other flows, perhaps without a specific geometric motivation.  For example, List studied the coupling of Ricci flow with heat flow for functions in order to address certain questions in general relativity \cite{List2008}.  If $(\mcl{M},g)$ is a manifold and $u:\mcl{M}\rightarrow \mbb{R}$ is a function, then the flow is
\begin{equation}\label{eq:hrf-1d}
\begin{aligned}
\partial_t g &= -2 \Rc + 2 \smfrac{n-1}{n-2} \, du \otimes du \\
\partial_t u &= \Delta u
\end{aligned}
\end{equation}
Taking this a step further, M\"uller recently studied the abstract coupling of Ricci flow and harmonic map flow \cite{Muller2012}.  More precisely, if $\phi : (\mcl{M},g) \rightarrow (\mcl{N},h)$ is a map of Riemannian manifolds, the \textit{harmonic-Ricci flow} is the coupled system
\begin{equation}\label{eq:hrf}
\begin{aligned}
\partial_t g    &= -2 \Rc + 2 c \, \nabla \phi \otimes \nabla \phi \\
\partial_t \phi &= \tau_{g,h} \phi 
\end{aligned}
\end{equation}
where $\tau_{g,h} \phi$ is the harmonic map Laplacian (or tension field) of $\phi$, $\nabla \phi \otimes \nabla \phi = \phi^* h$, and $c$ is a (possibly time-dependent) coupling constant.

As described in \cite{Muller2012}, this flow has several desirable properties, and can actually be more well-behaved than either individual flow.  For example, unlike the harmonic map flow, many results do not require restrictions on the curvature of the target manifold.  It also arises as the gradient flow of a certain energy functional in the same way that Ricci flow is the gradient flow of an energy functional \cite{Perelman2002}.  M\"{u}ller also proved versions of a number of foundational theorems that are crucial to any analysis of the flow: solutions have short-time existence and uniqueness, derivatives of curvature of solutions satisfy Bernstein-Bando-Shi-type estimates, and long-time existence is obstructed only by the norm of the curvature tensor.

In this paper, we continue the investigation of the harmonic-Ricci flow.  First, in Section \ref{subsec:hrf-ex} we describe a new connection between the flows \eqref{eq:inv-RF} and \eqref{eq:hrf}, which shows that harmonic-Ricci flow can arise in certain geometric contexts\footnote{Ricci flow and the harmonic map flow are indeed known to be related.  For example, the DeTurck trick modifies Ricci flow by diffeomorphisms to make it strictly parabolic, and was used to prove short-time existence and uniqueness of Ricci flow on closed manifolds in \cite{DeTurck1983}.  Hamilton subsequently observed that these diffeomorphisms actually solve a modified harmonic map flow \cite{Hamilton1993}.  The phenomenon that we describe is particular to locally $\R^N$-invariant Ricci flow, and is independent of any harmonic map flow connection from a DeTurck trick.}.  Essentially, the heat-type equation for $G$ is a modified harmonic map flow, when $G$ is thought of as a map from the base $\mcl{B}$ into the space of inner products on the $\R^N$-fibers, $\SL(N,\R)/\SO(N)$.  See Proposition \ref{prop:hmf-G}.  This connection is best seen in the special case of a flat $\R^N$-vector bundle, which Lott used in order to study expanding Ricci solitons \cite{Lott2007}.  Here, Ricci flow is precisely the coupled flow \eqref{eq:hrf}.  In Section \ref{subsec:gr-sol} we consider gradient solitons for the harmonic-Ricci flow, and list a few examples.

In Section \ref{sec:compactness} is the main result of this paper.  It is a basic analytical tool for the flow, an adaptation of the ``compactness theorem'' of Hamilton \cite{Hamilton1995}, which allows one to extract convergent subsequences from sequences of solutions to the flow, assuming certain bounds on curvature and injectivity radius.  Such theorems are useful in the investigation of flows near singular times.  See Theorem \ref{thm:compactness}.  Following Lott \cite{Lott2007}, in Section \ref{sec:grpd} we generalize this compactness theorem to the setting of Riemannian groupoids, where the bound on injectivity radius is not needed.  As demonstrated in \cites{Glick2008,Lott2007}, groupoids provide a unified way to discuss convergence, for example, of Riemannian manifolds under geometric flows when the limit is a different manifold.  

Finally, in Section \ref{sec:ex} we provide a detailed, nontrivial example of harmonic-Ricci flow solutions on the Lie group $\Nil^3$, where the metrics are left-invariant and the map is a harmonic real-valued function.  The behavior of these solutions depends strongly on the coupling function, although it is similar to that of Ricci flow solutions if the function decays fast enough as $t \rightarrow \infty$.

\section{The harmonic-Ricci flow}\label{sec:hrf}

Let us provide background for the coupled flow in question.  Let $(\mcl{M},g)$ be a closed Riemannian manifold, with $(\mcl{N},h)$ a closed target manifold.  Let $\fn{\phi}{\mcl{M}}{\mcl{N}}$ be a smooth map.  The Levi-Civita covariant derivative $\nabla^{T\mcl{M}}$ of the metric $g$ on $\mcl{M}$ induces a covariant derivative $\nabla^{T^*\mcl{M}}$ on the cotangent bundle, which satisfies
\[ \nabla^{T^*\mcl{M}}_X \omega (Y) = X \big( \omega(Y) \big) - \omega \left( \nabla^{T\mcl{M}}_X Y \right). \]
By requiring a product rule and compatibility with the metric, we also have convariant derivatives on all tensor bundles 
\[ T^p_q(\mcl{M}) = (T^*\mcl{M})^{\otimes p} \otimes (T\mcl{M})^{\otimes q}. \]
The Levi-Civita covariant derivative $\nabla^{T\mcl{N}}$ of the metric $h$ on $N$ induces a covariant derivative $\nabla^{\phi^* T\mcl{N}}$ on the pull-back bundle $\phi^* T\mcl{N} \rightarrow \mcl{M}$, given by
\[ \nabla^{\phi^* T\mcl{N}}_X Y = \nabla^{T\mcl{N}}_{\phi_* X} Y, \]
for $X \in C^\infty(T \mcl{M})$ and $Y \in C^\infty(T \mcl{N})$.  As before, we get a covariant derivative on all tensor bundles over $\mcl{M}$ of the form
\[ T^p_q(\mcl{M}) \otimes T^r_s(\phi^* \mcl{N}) = (T^*\mcl{M})^{\otimes p} \otimes (T\mcl{M})^{\otimes q} \otimes (\phi^* T^*\mcl{N})^{\otimes r} \otimes (\phi^* T\mcl{N})^{\otimes s}. \]
We refer to them simply as $\nabla$.  Related quantities are decorated with the metric name, if necessary, e.g., ${}^g \nabla$.  In local coordinates $(x^i)$ on $\mcl{M}$ and $(y^\lambda)$ on $\mcl{N}$,
\[ \nabla \phi = \phi_* = \partial_i \phi^{\lambda} \, dx^i \otimes \partial_{\lambda}|_{\phi} \in C^\infty(T^*\mcl{M} \otimes \phi^* T\mcl{N}). \]
Similarly, we have
\begin{align*}
\nabla^2 \phi 
&= \big( \partial_i \partial_j \phi^\lambda - {}^g \Gamma_{ij}^k \partial_k \phi^\lambda + ({}^h \Gamma \circ \phi)_{\mu \nu}^\lambda \partial_i \phi^\mu \partial_j \phi^\nu \big) \, dx^i \otimes dx^j \otimes \partial_\lambda|_\phi \\
&\in C^\infty(T^*\mcl{M} \otimes T^*\mcl{M} \otimes \phi^* T\mcl{N}). 
\end{align*}
Additionally
\[ \nabla \phi \otimes \nabla \phi = h_{\lambda \mu} \partial_i \phi^\lambda \partial_j \phi^\mu \, dx^i \otimes dx^j = \phi^* h\]
and is a symmetric $(2,0)$-tensor on $\mcl{M}$, and we define
\[ \mrm{S} = \Rc - c \, \nabla \phi \otimes \nabla \phi, \]
so that
\[ \mrm{s} = \tr \mrm{S} = \scalar - c |\nabla \phi|^2, \]
where $c=c(t) \geq 0$ is a coupling function.  Finally, the \textit{tension field} of $\phi$ with respect to $g$ and $h$ is
\begin{equation}\label{eq:tension}
\tau_{g,h} \phi = \tr_g \nabla^2 \phi \in C^{\infty}(\phi^* TN),
\end{equation}
and has components
\[ (\tau_{g,h} \phi)^\lambda = g^{ij} \Big( \partial_i \partial_j \phi^\lambda - {}^g \Gamma_{ij}^k \partial_k \phi^\lambda + ({}^h \Gamma \circ \phi)_{\mu \nu}^\lambda \partial_i \phi^\mu \partial_j \phi^\nu \Big). \]

Now we recall the flow \eqref{eq:hrf}.  

\begin{defn}\label{def:hrf}
If $\phi : (\mcl{M},g) \rightarrow (\mcl{N},h)$ is a map of Riemannian manifolds, the \textit{harmonic-Ricci flow} is the coupled system
\begin{equation}\tag{\ref{eq:hrf}}
\begin{aligned}
\partial_t g    &= -2 \mrm{S} = -2 \Rc + 2 c \, \nabla \phi \otimes \nabla \phi \\
\partial_t \phi &= \tau_{g,h} \phi 
\end{aligned}
\end{equation}
\end{defn}

We will call this \textsc{hrf} for short, although it is also sometimes called the $(RH)_c$ flow.  We will assume that $c(t)$ is non-increasing.  As mentioned above, this flow was introduced in \cite{Muller2012} and is a generalization of one studied in \cite{List2008}.  

\begin{defn}\label{def:rh-soln}
A family $\{(\mcl{M}^n,g(t),\phi(t),O)\}$ of complete, pointed Riemannian manifolds with maps 
\[ \fnl{\phi(t)}{\mcl{M}}{\mcl{N}} \]
that solves the system \eqref{eq:hrf} with coupling function $c(t)$, for $t \in (\alpha,\omega)$, is a \textit{complete, pointed \textsc{hrf} solution}.
\end{defn}

\subsection{A connection with Ricci flow on certain principal bundles}\label{subsec:hrf-ex}

In this section, we explain a relationship between harmonic-Ricci flow \eqref{eq:hrf} and locally $\mbb{R}^N$-invariant Ricci flow \eqref{eq:inv-RF}, showing that harmonic-Ricci flow arises naturally in a certain geometric context.  Let $\mcl{M} \rightarrow \mcl{B}$ be a twisted principal $\mbb{R}^N$-bundle over a compact base and let $(g,A,G)$ be a locally $\mbb{R}^N$-invariant metric, see \cite{Lott2010}.  Write $\mcl{S}_N = \SL(N,\R)/\SO(N)$ for the space of inner products on $\mbb{R}^N$ with fixed volume.  The tangent space $T_G \mcl{S}_N$ at $G \in \mcl{S}_N$ consists of trace-free symmetric bilinear forms.  There is a Riemannian metric on $T_G \mcl{S}_N$ defined by
\begin{equation}\label{eq:S-metric}
\overline{g}_G(X,Y) = \tr(G^{-1} X G^{-1} Y) = G^{ij} X_{jk} G^{k\ell} Y_{\ell i}.
\end{equation}

\begin{prop}\label{prop:hmf-G}
The evolution equation for $G$ from \eqref{eq:inv-RF} is a modified harmonic map flow for $\fn{G}{\mcl{B}}{\mcl{S}_N}$.  More precisely,
\[ \frac{\partial}{\partial t} G_{ij} = (\tau_{g(t),\overline{g}} G)_{ij}
- \frac{1}{2} g^{\alpha\gamma} g^{\beta\delta} G_{ik} G_{j\ell} (dA)_{\alpha\beta}^{k}(dA)_{\gamma\delta}^{\ell}. \]
\end{prop}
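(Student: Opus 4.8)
The plan is to compare the third equation in the system \eqref{eq:RRFS} term-by-term with the formula \eqref{eq:tau-G} for the tension field of $\fn{G}{\mcl{B}}{\mcl{S}_N}$. Since both expressions already share the leading Laplacian term $\Delta G_{ij}$ and the coupling terms involving $dA$ and $csG_{ij}$ are literally identical in form, the entire content of the proposition reduces to the purely Lie-theoretic identity
\begin{equation}\label{eq:christoffel-identity}
-g^{\alpha\beta} G^{k\ell} \nabla_\alpha G_{ik} \nabla^\beta G_{\ell j}
= g^{\alpha\beta} \sum_{\substack{p<q\\r<s}}^N ({}^{\mcl{S}_N} \! \Gamma \circ G)_{pq,rs}^{ij} \nabla_\alpha G_{pq} \nabla_\beta G_{rs},
\end{equation}
that is, the claim that the Christoffel symbols of the symmetric-space metric $\overline{g}$ from \eqref{eq:S-metric}, when contracted against the pair of derivatives $\nabla_\alpha G$, $\nabla_\beta G$, produce exactly $-G^{k\ell}\nabla_\alpha G_{ik}\nabla_\beta G_{\ell j}$ (symmetrized appropriately in $\alpha,\beta$). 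So the real work is a computation on $\mcl{S}_N = \SL(N,\R)/\SO(N)$ and has nothing to do with the flow.

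First I would recall the standard description of the metric $\overline{g}$ and its Levi-Civita connection on $\mcl{S}_N$, viewed as an open submanifold of the space of symmetric matrices. Writing $\overline{g}_G(X,Y) = \tr(G^{-1}XG^{-1}Y)$, one differentiates: for a curve $G(u)$ with $\dot G = X$ one has $\frac{d}{du}(G^{-1}) = -G^{-1}XG^{-1}$, and from this the Koszul formula gives that the connection is
\[
(\nabla_X Y)(G) = DY(X) - \tfrac{1}{2}\big(X G^{-1} Y + Y G^{-1} X\big),
\]
so that the Christoffel term is the bilinear symmetric map $(X,Y) \mapsto -\tfrac{1}{2}(XG^{-1}Y + YG^{-1}X)$. (One checks this is metric-compatible and torsion-free, and that it preserves the trace-free condition on the tangent space, which is consistent with \eqref{eq:tau-G}.) This is the only genuinely substantive step, and it is classical — it is the standard affine structure on the symmetric space of positive-definite forms; the factor $-\tfrac12$ and the symmetrization are what must be tracked carefully.

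With this in hand, I would substitute $X = \nabla_\alpha G$ and $Y = \nabla_\beta G$ and contract with $g^{\alpha\beta}$. The symmetrization $-\tfrac12(XG^{-1}Y + YG^{-1}X)$ becomes, after contracting against the symmetric tensor $g^{\alpha\beta}$, simply $-g^{\alpha\beta}\nabla_\alpha G\, G^{-1}\nabla_\beta G$ in matrix notation, i.e. $-g^{\alpha\beta}G^{k\ell}\nabla_\alpha G_{ik}\nabla_\beta G_{\ell j}$ in components — precisely the second term on the right-hand side of equation (c) of \eqref{eq:RRFS}. Unwinding the abstract index notation $({}^{\mcl{S}_N}\!\Gamma \circ G)_{pq,rs}^{ij}$ in \eqref{eq:tau-G} against the basis of symmetric matrices indexed by pairs $p \le q$, $r \le s$ then matches \eqref{eq:tau-G} with $\Delta G_{ij} - g^{\alpha\beta}G^{k\ell}\nabla_\alpha G_{ik}\nabla_\beta G_{\ell j}$, and the proposition follows by adding back the two coupling terms that were common to both sides.

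The main obstacle is bookkeeping rather than conceptual: one must pin down the correct normalization of the Christoffel symbols (the $-\tfrac12$) and be careful that the derivative of $G$ along a direction on $\mcl{B}$, namely $\nabla_\alpha G$, is genuinely a trace-free symmetric matrix at each point — equivalently that $G$ really does take values in $\SL(N,\R)/\SO(N)$ and not merely in the positive-definite cone — so that the formula \eqref{eq:tau-G} for the tension field into $\mcl{S}_N$ applies verbatim. A secondary, purely notational, point is reconciling the ``abstract'' index presentation of $\Gamma$ in \eqref{eq:tau-G} (sums over $p<q$, $r<s$) with the ``matrix'' presentation $-\tfrac12(XG^{-1}Y+YG^{-1}X)$; this is just a change of basis on the space of symmetric matrices and introduces no new content.
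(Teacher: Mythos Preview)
Your approach is correct and is precisely the direct computation that the paper itself acknowledges, in the Remark immediately following its proof, as a ``(lengthy)'' alternative. The paper takes a different, shorter route: it isolates the same key identity
\[
\Delta G_{ij} - g^{\alpha\beta} G^{k\ell} \nabla_\alpha G_{ik} \nabla_\beta G_{\ell j} = \tau_{g,\overline{g}}(G)_{ij},
\]
but rather than computing the Christoffel symbols of $\overline{g}$ directly, it observes that the left-hand side is already known (citing Lott, Proposition~4.17) to be the Euler--Lagrange expression for the energy $E(G) = \tfrac{1}{2}\int_{\mcl{B}} g^{\alpha\beta}\tr(G^{-1}\nabla_\alpha G\, G^{-1}\nabla_\beta G)\,dV$, and then appeals to the general fact from harmonic map theory that the Euler--Lagrange expression of a map's Dirichlet energy \emph{is} its tension field. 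Your Koszul-formula derivation of $\nabla_X Y = DY(X) - \tfrac{1}{2}(XG^{-1}Y + YG^{-1}X)$ is correct and makes the argument self-contained, at the cost of the index bookkeeping you flag; the paper's variational argument is conceptually cleaner but offloads the computation to a cited reference. Incidentally, your concern about whether $\nabla_\alpha G$ is trace-free is not actually an obstruction: the Christoffel formula you derive holds verbatim on the full cone of positive-definite matrices equipped with $\overline{g}$, not only on the unit-determinant slice $\mcl{S}_N$.
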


\begin{proof}
What we are really claiming is that 
\begin{equation}\label{eq:tension-simple}
\Delta G_{ij} - g^{\alpha\beta} G^{k\ell} \nabla_\alpha G_{ik} \nabla_\beta G_{\ell j} 
= (\tau_{g,\overline{g}} G)_{ij}.
\end{equation}
The map $G$ has energy
\[ E(G) = \frac{1}{2} \int_B g^{\alpha \beta} \tr( G^{-1} \nabla_\alpha G^{-1} \nabla_\beta G) \, dV. \]
In \cite{Lott2007}*{Proposition 4.17} it is shown that the variational equation of this energy is 
\[ \Delta G_{ij} - g^{\alpha\beta} G^{k\ell} \nabla_\alpha G_{ik} \nabla_\beta G_{\ell j} = 0. \]
It follows from the variational definition of the tension field that the left side of this equation must be $\tau_{g(t),\overline{g}}(G)$. 
\end{proof}


In \cite{Lott2007} Lott considers a special case of locally $\mbb{R}^N$-invariant Ricci flow \eqref{eq:inv-RF}.  Namely, let $\mcl{M}$ be an $\R^N$-vector bundle with flat connection preserving fiberwise volume forms, flat metric $G$ on the fibers, and Riemannian base $(\mcl{B},g)$.  Write the metric on $\mcl{M}$ as $(g,0,G)$.  Then the fiber metrics constitute a map $G:\mcl{B} \rightarrow \mcl{S}_N$ as before.  From \cite{Lott2007}*{Equation (4.10)}, Ricci flow on $\mcl{M}$ becomes the pair of equations
\begin{align*}
\frac{\partial}{\partial t} g_{\alpha\beta} &= -2 R_{\alpha\beta} + \frac{1}{2} G^{ij} \nabla_\alpha G_{jk} G^{k\ell} \nabla_\beta G_{\ell i} \\
\frac{\partial}{\partial t} G_{ij}          &=  g^{\alpha\beta} \nabla_\alpha \nabla_\beta G_{ij} - g^{\alpha\beta} \nabla_\alpha G_{ik} G^{kl} \nabla_\beta G_{\ell i}.
\end{align*}
But with the metric $\overline{g}$ on $\mcl{S}_N$ as in \eqref{eq:S-metric}, we see that
\[ \frac{1}{2} G^{ij} \nabla_\alpha G_{jk} G^{k\ell} \nabla_\beta G_{\ell i} = \frac{1}{4} (\nabla G \otimes \nabla G)_{\alpha\beta}, \]
and Proposition \ref{prop:hmf-G} says that
\[ g^{\alpha\beta} \nabla_\alpha \nabla_\beta G_{ij} - g^{\alpha\beta} \nabla_\alpha G_{ik} G^{kl} \nabla_\beta G_{\ell i} = (\tau_{g,\overline{g}} G)_{ij}. \]
This means Ricci flow on $\mcl{M}$ is precisely \textsc{hrf} on $\mcl{B}$, with target manifold $(\mcl{S}_N,\overline{g})$, maps $G$, and $c=1/8$.

This gives many examples of \textsc{hrf} solutions.  For instance, the homogeneous spaces in \cite{Lott2007} that admit expanding Ricci solitons all have the bundle structure just described.  Those solitons correspond to Ricci flow solutions, and hence to \textsc{hrf} solutions.

\subsection{Gradient harmonic-Ricci solitons}\label{subsec:gr-sol}

Metrics leading to self-similar solutions of Ricci flow---that is, Ricci solitons---have been studied extensively in recent years.  In this section we briefly consider the corresponding pairs $(g,\phi)$ that give self-similar solutions of harmonic-Ricci flow.   Following \cite{Muller2012}*{Definition 2.1}, given $(\mcl{M},g)$, $(\mcl{N},h)$, and $\varphi : \mcl{M} \rightarrow \mcl{N}$, a \textit{harmonic-Ricci soliton} is a pair $(g,\phi)$ such that
\begin{align*}
g(t)    &= \sigma(t) \varphi_t^* g \\
\phi(t) &= \varphi_t^* \phi
\end{align*}
solves \eqref{eq:hrf}, for some function $\sigma$ and some family of diffeomorphisms $\varphi_t$ of $\mcl{M}$.  Suppose that these diffeomorphisms are generated by the gradient of a function $f$.  By \cite{Muller2012}*{Lemma 2.2}, the soliton condition is then equivalent to the following static equations for $g$, $\phi$, and $f$:
\begin{equation}\label{eq:gsol}
\begin{aligned}
0 &= \Rc - \alpha \, \nabla \phi \otimes \nabla \phi + \Hess(f) + \lambda g \\
0 &= \tau_{g,h} \phi - \nabla \phi(\nabla f)
\end{aligned}
\end{equation}
Denote a gradient soliton by $(g,\phi,f)$.  The soliton is \textit{shrinking/steady/expanding} for $\lambda$ positive/zero/negative.  As with Ricci flow, there is a functional for which \eqref{eq:hrf} is the gradient flow, and for which solitons are critical points.

As we mentioned in Section \ref{subsec:hrf-ex}, Lott considers Ricci flow on a flat $\R^N$-vector bundle $\mcl{M}$ over a manifold $(\mcl{B},g)$, with flat metrics $G$ on the fibers, and such that the fiberwise volume forms are preserved by the flat connection \cite{Lott2007}.  This construction arises as a model for non-compact expanding Ricci solitons.  Lott derived the soliton equation in this case and showed that it is a soliton-like equation for the base, plus the condition that the metric $G$, interpreted as a map $\mcl{B} \rightarrow \mcl{S}_N$, is harmonic \cite{Lott2007}*{Theorem 1.2 or Proposition 4.4}.  He calls those the \textit{harmonic-Einstein} equations.  We generalize that notion in the following definition.

\begin{defn}
Given $(\mcl{M},g)$, $(\mcl{N},h)$, and $\varphi : \mcl{M} \rightarrow \mcl{N}$, the pair $(g,\phi)$ is \textit{harmonic-Einstein} if
\begin{equation}\label{eq:heinstein}
\begin{aligned}
0 &= \Rc - \alpha \, \nabla \phi \otimes \nabla \phi + \lambda g \\
0 &= \tau_{g,h} \phi
\end{aligned}
\end{equation}
for some $\alpha,\lambda \in \mbb{R}$.
\end{defn}

In the same way that an Einstein metric is a fixed point of normalized Ricci flow, a harmonic-Einstein pair is a fixed point of the analogous normalized harmonic-Ricci flow.  We also have an extension of a result of Hamilton that holds for Ricci solitons, which rules out steady or expanding gradient solitons where $f$ is non-trivial \cite{Hamilton1995-sing}.

\begin{prop}
If $\mcl{M}$ is closed and $(g,\phi,f)$ is a steady or expanding gradient harmonic-Ricci soliton, then $(g,\phi)$ is harmonic-Einstein. 
\end{prop}

\begin{proof}
By \cite{Muller2012}*{Theorem 4.4}, we have that under \textsc{hrf}
\[ \partial_t \mrm{s} = \Delta \mrm{s} + 2|\mrm{S}|^2 + 2c |\tau_{g,h}|^2. \]
On the other hand, we can compute $\partial_t \mrm{s}$ from the equation $\mrm{s}[\epsilon \varphi^* g] = \epsilon^{-1} \varphi^* (s[g])$, and combine it with the above expression to obtain
\[ \Delta \mrm{s} + 2|\mrm{S}|^2 + 2c |\tau_{g,h}|^2 - \mcl{L}_X \mrm{s} + \epsilon \mrm{s} = 0, \]
where $\epsilon \in \mbb{R}$ and $\varphi$ is a diffeomorphism generated by a vector field $X$.  Now, the remainder of the proof virtually the same as the proof of \cite{Chow2007}*{Proposition 1.13}, with $\mrm{S}$ in place of $\Rc$ and $\mrm{s}$ in place of $\scalar$. 
\end{proof}

\begin{ex} The ubiquity of both gradient Ricci solitons and harmonic maps in Riemannian geometry leads to many examples harmonic-Ricci solitons. 

\begin{enumerate}
\item Let $(\mcl{M},g,f)$ be any gradient Ricci soliton, let $(\mcl{N},h)$ be any Riemannian manifold, and let $\phi : \mcl{M} \rightarrow \mcl{N}$ be a constant map.  Then $(g,\phi,f)$ is a gradient harmonic-Ricci soliton, which we will call \textit{trivial}.

\item A harmonic-Einstein pair $(g,\phi)$ is a harmonic-Ricci soliton in the same way an Einstein metric is a Ricci soliton with $f$ trivial.  Examples of these pairs are found in \cite{Lott2007}; see Section \ref{subsec:hrf-ex} above.

\item Let $(\mcl{N},h) = (\mcl{M},g)$, let $\phi$ be a $g$-isometry, let $f$ be constant, and let $g$ be $\mu$-Einstein.  Then $\phi$ is automatically harmonic, so the second equation in \eqref{eq:gsol} is satisfied.  The first equation is satisfied if and only if $\mu - \alpha + \lambda = 0$. 

A slightly more specific example of this is when $\mcl{M}=\mcl{G}$ is a Lie group, $g$ is a left-invariant metric, and $\phi = L_x$ is left multiplication by any element of $x \in \mcl{G}$.

\item To generalize the previous example, let $\phi : (\mcl{M},g) \hookrightarrow (\mcl{N},h)$ be a minimal isometric immersion (so $\phi$ is harmonic), where $(\mcl{M},g)$ is $\mu$-Einstein and $f$ is constant.  Again, this is a soliton if and only if $\mu - \alpha + \lambda = 0$.

\item Consider a Riemannian submersion $\pi : (\mcl{M},\tilde{g}) \rightarrow (\mcl{B},g)$ such that the $A$ and $T$ tensors vanish (see \cite{Besse2008}*{Chapter 9}).  In particular, $\mcl{M}$ is locally a Riemannian product of $(\mcl{B},g)$ and $(\mcl{F},h)$.  The map $\pi$ is harmonic since the $T$-tensor vanishes .  If $h$ is $\mu$-Einstein and $(\mcl{B},g,f)$ is a gradient soliton with soliton constant $\lambda$, then we see that \eqref{eq:gsol} is satisfied if and only if $\mu - \alpha + \lambda = 0$.
\end{enumerate}
\end{ex}

\begin{rem}
In all of these examples, either $\phi$ is harmonic and $f$ is trivial, or $\phi$ is trivial and $(g,f)$ is a gradient Ricci soliton.  It would be interesting to find examples of gradient harmonic-Ricci solitons where this is not the case.
\end{rem}

\section{A compactness theorem}\label{sec:compactness}

So-called compactness theorems are crucial in the study of geometric flows, especially regarding singularity models.  For example, one often wishes to construct sequences of rescaled solutions of a flow in order to investigate the behavior at a singular time (possibly $T = +\infty$), and it is helpful to be able to extract convergent subsequences.  In the context of Ricci flow, the original compactness theorem appears in \cite{Hamilton1995}, and was generalized to the groupoid setting in \cite{Lott2007}.  A version for \textsc{hrf}, with 1-dimensional target, appears in \cite{List2008}.  First we need a definition.

\begin{defn}\label{def:rh-converge}
A sequence $\{(\mcl{M}_k^n,g_k(t),\phi_k(t),O_k)\}$ of complete, pointed \textsc{hrf} solutions \textit{converges} to $(\mcl{M}_{\infty}^n,g_{\infty}(t),\phi_{\infty}(t),O_{\infty})$ for $t \in (\alpha,\omega)$ if there exists
\begin{itemize}
\item an exhaustion $\{\mcl{U}_k\}$ of $\mcl{M}_{\infty}$ by open sets with $O_{\infty} \in \mcl{U}_k$ for all $k$, and
\item a family of pointed diffeomorphisms $\{ \fn{\Psi_k}{(\mcl{U}_k,O_\infty)}{(\mcl{V}_k,O_k) \subset \mcl{M}_k} \}$ 
\end{itemize}
such that
\[ \left( \mcl{U}_k \times (\alpha,\omega), \Psi_k^* \left( g_k(t)|_{\mcl{V}_k} + dt^2 \right), \Psi_k^* \phi_k|_{\mcl{V}_k} \right) \]
converges uniformly in $C^{\infty}$ on compact sets to
\[ \left( \mcl{M}_{\infty}^n \times (\alpha,\omega), g_{\infty}(t) + dt^2, \phi_{\infty}(t) \right). \]
Here, $dt^2$ is the standard metric on $(\alpha,\omega) \subset \R$.
\end{defn}

\begin{thm}\label{thm:compactness}
Let $\{(\mcl{M}^n_k, g_k(t), \phi_k(t), O_k)\}$ be a sequence of complete, point\-ed \textsc{hrf} solutions, with $0,t \in (\alpha, \omega)$, $c(t)$ non-increasing, and $(\mcl{N},h)$ compact, such that
\begin{itemize}
\item[(a)] the geometry is uniformly bounded: for all $k$,
\[ \sup_{(x,t) \in \mcl{M}_k \times (\alpha, \omega)} |\Rm_k|_k \leq C \]
for some $C_1$ independent of $k$;
\item[(b)] the initial injectivity radii are uniformly bounded below: for all $k$, 
\[ \inj_{g_k(0)}(O_k) \geq \iota_0 > 0, \]
for some $\iota_0$ independent of $k$.
\end{itemize}
Then there is a subsequence such that 
\[ \mapl{\big( \mcl{M}_k,g_k(t),\phi_k(t),O_k \big)}{\big( \mcl{M}_{\infty},g_{\infty}(t),\phi_{\infty}(t),O_{\infty} \big)}, \]
where the limit is also a pointed, complete, \textsc{hrf} solution.

If we do not assume a bound on he injectivity radius bound, then we have convergence to 
\[ \big( \mcl{G}_{\infty},g_{\infty}(t),\phi_{\infty}(t),O_{\infty} \big), \]
a complete, pointed, $n$-dimensional, \'{e}tale Riemannian groupoid with map $\phi_{\infty}$ on the base.
\end{thm}

\begin{rem}
It is somewhat surprising that, unlike many theorems involving simply the harmonic map flow, this theorem makes no assumptions about the curvature of the codomain manifold.  Therefore, the only difference between the statment of this theorem and the statement of Hamilton's original theorem is the flow for which we are considering solutions.
\end{rem}

The idea of the proof is the same as in \cite{Hamilton1995}, and subsequently \cite{List2008}, although we follow the exposition found in \cite{Chow2007}*{Chapter 3}.  Briefly, the main ingredients are derivative estimates to bound the curvature and the derivatives of the map $\phi$, a general compactness theorem of Hamilton, a technical lemma, and corollary of the Arzela-Ascoli theorem.  Of course, many facts about \textsc{hrf}, found in \cite{Muller2012}, are used along the way.

\begin{ex}\label{ex:blowdown}
Here is a way to obtain sequences of \textsc{hrf} solutions like those considered in the compactness theorem.  Consider a solution of \textsc{hrf} with $c(t)$ constant: $\big(g(t),\phi(t))$.  We can obtain a family of \textsc{hrf} solutions by performing a \textit{blowdown}, a technique used extensively in \cite{Lott2007} and \cite{Lott2010}, and also in \cites{LottSesum2011,Williams2010a}.  For $s \in (0,\infty)$, define 
\[ \big(g_s(t),\phi_s(t)\big) = \left( \frac{1}{s} g(st), \phi(st) \right). \]
Now we see that
\[ \partial_t g_s(t) = \left( \partial_t g \right)(st) \quad \text{and} \quad \partial_t \phi_s(t) = s \left( \partial_t \phi \right)(st), \]
and
\begin{align*}
\mrm{S}[g_s(t),\phi_s(t))] 
&= -2 \Rc[g_s(t)] + 2 c \, \nabla \phi_s(t) \otimes \nabla \phi_s(t) \\
&= -2 \Rc[g(st)] + 2 c \, \nabla \phi(st) \otimes \nabla \phi(st) \\
&= \mrm{S}[g(st),\phi(st)],
\end{align*}
\begin{align*}
\tau_{g_s(t),h} \phi_s(t)
&= \tr_{g_s(t)} \nabla^2 \phi_s(t) \\
&= s \tr_{g(st)} \nabla^2 \phi(st) \\
&= s \tau_{g(st),h} \phi(st).
\end{align*}
Therefore, for each $s$, the blowdown gives another \textsc{hrf} solution.  It is common to replace the continuous parameter $s$ with a sequence $\{s_j\}$ converging to infinity.  

Given the success of this technique regarding Ricci flow solutions \cites{Lott2007,Lott2010}, it seems likely that this technique will be useful for studying solutions of the harmonic-Ricci flow.
\end{ex}

We mention that we will use abbreviated notation for geometric objects associated with metrics $g_k(t)$.  For example, $\Rc_k(t)$ means $\Rc[g_k(t)]$, and $\nabla_k$ refers to the Levi-Civita covariant derivative corresponding to the metric $g_k(t)$ (not to be confused with the covariant derivative $\nabla_{\partial/\partial_k}$---it should be clear from context which is intended).  Also, an undecorated $\nabla$ will refer to the Levi-Civita covariant derivative corresponding to a background metric.

\subsection{Two lemmas}\label{subsec:cpt-lemma}

In this section we prove two lemmas that will be used in the proof of Theorem \ref{thm:compactness}.  The first is an analogue of \cite{Hamilton1995}*{Lemma 2.4}, \cite{Chow2007}*{Lemma 3.11}, and \cite{List2008}*{Lemma 7.6}.

\begin{lem}\label{lem:tech}
Let $(\mcl{M}^n,g)$ be a Riemannian manifold, with $\mcl{K} \subset \mcl{M}$ compact.  Let $\{(g_k(t),\phi_k(t))\}$ be a sequence of solutions to \textsc{hrf}, defined on $\mcl{K} \times [\beta,\psi]$, where $t_0 \in [\beta,\psi]$.  Suppose the following hold.
\begin{itemize}
\item[] The metrics $g_k(t_0)$ are uniformly equivalent to $g$ on $\mcl{K}$.  That is, for all $x \in \mcl{K}$, $V \in T_x \mcl{M}$, and $k$, there is $C < \infty$ such that
\begin{equation}\label{met-equiv}
C^{-1} g(V,V) \leq g_k(t_0)(V,V) \leq C g(V,V). 
\end{equation}

\item[] The covariant derivatives of $g_k(t_0)$ and $\phi_k$ with respect to $g$ are uniformly bounded on $K$.  That is, for all $p \geq 0$, there exist $C_p, C_p'$ such that
\begin{equation}\label{cov-met-bd}
\max_{x \in \mcl{K}} |\nabla^{p+1} g_k(t_0)| \leq C_p < \infty,
\end{equation}
\begin{equation}\label{cov-map-bd}
\max_{x \in \mcl{K}} |\nabla^p \phi_k(t_0)| \leq C_p' < \infty.
\end{equation}

\item[] The covariant derivatives of $\Rm_k$ and $\phi_k$ with respect to $g_k(t)$ are uniformly bounded on $K \times [\beta,\psi]$.  That is, for all $p \geq 0$, there exist $C_p'', C_p'''$ such that
\begin{equation}\label{cov-rm-bd}
\max_{x \in \mcl{K}} |\nabla^p_k \Rm_k|_k \leq C_p'' < \infty,
\end{equation}
\begin{equation}\label{cov-d-bd}
\max_{x \in \mcl{K}} |\nabla^{p}_k \phi_k|_k \leq C_p''' < \infty.
\end{equation}
\end{itemize}
Then the following hold.
\begin{itemize}
\item[] The metrics $g_k(t)$ are uniformly equivalent to $g$ on $\mcl{K} \times [\beta,\psi]$.  That is, for all $x \in \mcl{K}$, $V \in T_x \mcl{M}$, $k$, there exists $B > 0$ such that
\begin{equation}\label{met-equiv-t}
B^{-1} g(V,V) \leq g_k(t)(V,V) \leq B g(V,V).
\end{equation}

\item[] The time and covariant derivatives with respect to $g$ of $g_k(t)$ and $\phi_k(t)$ are uniformly bounded on $\mcl{K} \times [\beta,\psi]$.  That is, for all $p$ and $q$, there exist $\tilde{C}_{p,q}, \tilde{D}_{p,q}$ such that
\begin{equation}\label{time-cov-met-bd}
\max_{x \in \mcl{K}} \left| \frac{\partial^q}{\partial t^q} \nabla^p g_k(t) \right| \leq \tilde{C}_{p,q} < \infty,
\end{equation}
\begin{equation}\label{time-cov-map-bd}
\max_{x \in \mcl{K}} \left| \frac{\partial^q}{\partial t^q} \nabla^p \phi_k(t) \right| \leq \tilde{D}_{p,q} < \infty.
\end{equation}
\end{itemize}
\end{lem}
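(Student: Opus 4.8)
The plan is to mirror the classical argument for Ricci flow (as in \cite{Hamilton1995}*{Lemma 2.4}, \cite{Chow2007}*{Lemma 3.11}) and its List-type extension \cite{List2008}*{Lemma 7.6}, treating the metric and the map in parallel. The heart of the matter is that all the bounds to be established are for derivatives \emph{with respect to the fixed background metric $g$}, whereas the hypotheses \eqref{cov-rm-bd}--\eqref{cov-d-bd} control derivatives with respect to the evolving metrics $g_k(t)$; the bridge between the two is exactly the uniform equivalence of metrics, which one must propagate from time $t_0$ to all of $[\beta,\psi]$.

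First I would establish \eqref{met-equiv-t}. Since $\partial_t g_k = -2\mrm{S}_k = -2\Rc_k + 2c\,\nabla\phi_k\otimes\nabla\phi_k$, the pointwise logarithmic derivative $\partial_t \log g_k(t)(V,V)$ is controlled by $|\Rc_k|_k$ and by $c(t)\,|\nabla\phi_k|_k^2$, both of which are uniformly bounded: the curvature by hypothesis (a)/\eqref{cov-rm-bd} with $p=0$, the map term by \eqref{cov-d-bd} with $p=1$ together with the fact that $c(t)$ is non-increasing hence bounded on $[\beta,\psi]$. Integrating from $t_0$ over a time interval of bounded length and using \eqref{met-equiv} at $t_0$ yields \eqref{met-equiv-t} with $B$ depending only on $C$, the length $\psi-\beta$, and the uniform constants $C_0'',C_1'''$. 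This is the step where the $(RH)_c$ flow differs from pure Ricci flow, but only superficially: the extra term $c\,\nabla\phi\otimes\nabla\phi$ is nonnegative and uniformly bounded, so it causes no trouble. In fact, since it is nonnegative, $\partial_t g_k \le -2\Rc_k \le 2C_1 g_k$ in one direction, and in the other $\partial_t g_k \ge -2\Rc_k - 2c(\beta)|\nabla\phi_k|_k^2 g_k$, giving two-sided exponential control.

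Next I would prove \eqref{time-cov-met-bd} and \eqref{time-cov-map-bd} by induction on $q$. For $q=0$: the difference between the background connection $\nabla$ and the evolving connection $\nabla_k$ is a tensor $\Gamma_k - \underline\Gamma$, and by \eqref{met-equiv-t} together with \eqref{cov-met-bd} one bounds $|\nabla^p(g_k - g)|$, hence $|\nabla^p g_k|$, and inductively $|\nabla^p(\Gamma_k-\underline\Gamma)|$, by a standard schematic computation (this is precisely the mechanism of \cite{Hamilton1995}*{Lemma 2.4}); feeding this into \eqref{cov-rm-bd} converts $g_k(t)$-derivatives of $\Rm_k$ into $g$-derivatives, and similarly \eqref{cov-d-bd} converts $g_k(t)$-derivatives of $\phi_k$ into $g$-derivatives of $\phi_k$, where for the map one also uses \eqref{cov-map-bd} at $t_0$ and the boundedness of ${}^{\mcl{N}}\!\Gamma$ on the closed target $\mcl{N}$. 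For the inductive step in $q$, one differentiates the evolution equations \eqref{eq:rhc-flow}: $\partial_t g_k$ is a universal polynomial in $g_k^{-1}$, $\Rm_k$, $\nabla\phi_k$, $\nabla^2\phi_k$ (through $\Rc$ and $\nabla\phi\otimes\nabla\phi$), and $\partial_t\phi_k = \tau_{g_k,h}\phi_k = \tr_{g_k}\nabla^2\phi_k$; commuting $\partial_t^{q}\nabla^p$ through and expanding schematically, every term is a product of factors already bounded by the inductive hypothesis in $(p',q')$ with $q' < q$, plus the already-established $q=0$ bounds, plus time-derivatives of $c(t)$ — and here one needs $c$ to be smooth with bounded derivatives on $[\beta,\psi]$, which is implicit in the standing hypotheses on the coupling function.

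The main obstacle is bookkeeping rather than any genuine analytic difficulty: one must set up the schematic notation carefully so that the induction on $(p,q)$ closes, keeping track of the fact that each time-derivative of $g_k$ costs two spatial derivatives of $\phi_k$ (via the $\nabla\phi\otimes\nabla\phi$ term) and each time-derivative of $\phi_k$ costs two spatial derivatives of $\phi_k$ and one of $g_k$ (via $\tr_{g_k}\nabla^2\phi_k$, whose Christoffel symbols involve $g_k$). Provided one orders the induction by, say, $q$ first and then $p$, and invokes \eqref{cov-rm-bd}--\eqref{cov-d-bd} to handle the "pure space, all times" estimates that seed each stage, everything goes through exactly as in the Ricci flow case; the coupling introduces only lower-order, uniformly controlled corrections. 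I would present the $q=0$ case in full schematic detail and then indicate the inductive step, citing \cite{Hamilton1995} and \cite{Chow2007} for the parts identical to the classical argument.
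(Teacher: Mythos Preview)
Your proposal is correct and follows essentially the same route as the paper: first propagate the metric equivalence \eqref{met-equiv-t} by integrating $|\partial_t \log g_k(V,V)|$ using the bound on $|\mrm{S}_k|_k$, then handle the $q=0$ spatial derivatives by controlling $\Gamma_k-\Gamma$ (the paper does this by integrating $|\partial_t(\Gamma_k-\Gamma)|_k$, bounded via the Christoffel evolution, and then uses a difference-of-powers identity $\nabla^N-\nabla_k^N=\sum\nabla^{N-i}(\nabla-\nabla_k)\nabla_k^{i-1}$ for the induction on $p$), and finally reduce higher $q$ to lower $q$ by expanding the evolution equations for $\mrm{S}$, $\tau_{g,h}\phi$, $\Gamma$, $\Rm$, $\Rc$ from \cite{Muller2009}. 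One phrasing to tighten: in your $q=0$ step, \eqref{cov-met-bd} alone only controls $|\nabla^p g_k|$ at $t=t_0$; the bound for all $t\in[\beta,\psi]$ requires the time-integration of $\partial_t(\Gamma_k-\Gamma)$ that is implicit in your citation of \cite{Hamilton1995}*{Lemma 2.4} but not in your summary sentence.
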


\begin{proof} First, note that throughout the proof we will follow standard practice in not indexing constants, and will often use the same symbol (e.g., $C$) for different constants within a sequence of inequalities.

To prove (a), we have
\[ \partial_t g_k(t) = -2 \mrm{S}_k(t) = -2 \Rc_k(t) + 2 c(t) \nabla_k \phi_k(t) \otimes \nabla_k \phi_k(t), \]
so that for $V \in T\mcl{M}$,
\begin{align*}
\left| \partial_t g_k(t)(V,V) \right|
&= |-2 \Rc_k(t)(V,V) + 2 c(t) \nabla_k \phi_k(t) \otimes \nabla_k \phi_k(t)(V,V)| \\
&\leq 2 |\Rc_k(t)||V|_k^2 + 2 |c(t)| |\nabla_k \phi_k(t)|^2 |V|_k^2 \\
&\leq C' |V|_k^2 \\
&= C' g_k(t)(V,V).
\end{align*}
This implies
\[ |\partial_t \log g_k(t)(V,V)| = \left| \frac{\partial_t g_k(t)(V,V)}{g_k(t)(V,V)} \right| \leq C', \]
and thus for any $t_1 \in [\beta,\psi]$, we have
\[ \int_{t_0}^{t_1} |\partial_t \log g_k(t)(V,V)| \, dt \leq C' |t_1 - t_0|. \]
This gives
\begin{align*}
C' |t_1 - t_0|
&\geq \int_{t_0}^{t_1} |\partial_t \log g_k(t)(V,V)| \, dt \\
&\geq \left| \int_{t_0}^{t_1} \partial_t \log g_k(t)(V,V) \, dt \right| \\
&= \left| \log \frac{g_k(t_1)(V,V)}{g_k(t_0)(V,V)} \right|.
\end{align*}
Expanding this gives
\[ -C'|t_1-t_0| \leq \log \frac{g_k(t_1)(V,V)}{g_k(t_0)(V,V)} \leq C'|t_1-t_0|, \]
and exponentiating gives
\[ \exp(-C'|t_1-t_0|) g_k(t_0)(V,V) \leq g_k(t_1) \leq \exp(C'|t_1-t_0|) g_k(t_0)(V,V). \]
Combining this with the original hypotheses, we get
\[ C^{-1} \exp(-C'|t_1-t_0|) g(V,V) \leq g_k(t_1) \leq C \exp(C'|t_1-t_0|) g(V,V). \]
Since $t_1$ was arbitrary, and since $C\exp(C'|t_1-t_0|) \leq C\exp(C'|\psi-\beta|) = B$, this completes the proof of (a).

Next, we prove \eqref{time-cov-met-bd} and \eqref{time-cov-map-bd}.  Observe that
\begin{equation}\label{der-metric}
\left| \frac{\partial^q}{\partial t^q} \nabla^p g_k(t) \right| = \left| \nabla^p \frac{\partial^{q-1}}{\partial t^{q-1}} \frac{\partial}{\partial t} g_k(t) \right| = 2 \left| \nabla^p \frac{\partial^{q-1}}{\partial t^{q-1}} \mcl{S}_k(t) \right|,
\end{equation} 
\begin{equation}\label{der-map} \left| \frac{\partial^q}{\partial t^q} \nabla^p \phi_k(t) \right| = \left| \nabla^p  \frac{\partial^{q-1}}{\partial t^{q-1}} \frac{\partial}{\partial t} \phi_k(t) \right| = \left| \nabla^p \frac{\partial^{q-1}}{\partial t^{q-1}} \tau_{g_k} \phi_k(t) \right|.
\end{equation}
Recall that $\nabla$ is the Levi-Civita covariant derivative corresponding to the background metric $g$.  In general,
\[ \begin{aligned}
S_{ij} &= R_{ij} - c \nabla_i \phi \nabla_j \phi, \\   
(\tau_{g,h} \phi)^\lambda &= g^{ij} \nabla_i \nabla_j \phi^\lambda, 
  \end{aligned}
\]
and we have the following evolution equations for $\mcl{S}$ and $\tau_{g,h} \phi$:  
\[ \begin{aligned}
\partial_t S_{ij} &= 
\Delta_\ell S_{ij} + 2 c \, \tau_{g,h} \phi \, \nabla_i \nabla_j \phi - \dot{c} \nabla_i \phi \nabla_j \phi \\
\partial_t \tau_{g,h} \phi
&= - g^{ik} g^{jl} S_{kl} (\nabla_i \nabla_j \phi) 
+ g^{ij} \Big( \Delta(\nabla_i \phi \nabla_j \phi) - 2 \nabla_p \nabla_i \phi \nabla_p \nabla_j \phi \\
&\quad - R_{ip} \nabla_p \phi \nabla_j \phi - R_{jp} \nabla_p \phi \nabla_i \phi 
+ 2 \Scal{{}^\mcl{N} \! \Rm(\nabla_i \phi,\nabla_p \phi) \nabla_p \phi,\nabla_j \phi} \Big).
\end{aligned} \]
To bound \eqref{der-metric} and \eqref{der-map}, we need to consider the evolution equations for all quantities involved, which appear in \cite{Muller2012}:
\[ \begin{aligned}   
\partial_t \Gamma_{ij}^p 
&= - g^{pq} (\nabla_i R_{jq} + \nabla_j R_{iq} - \nabla_q R_{ij}
   - 2c \nabla_i \nabla_j \phi \nabla_q \phi) 
\end{aligned} \]
\[ \begin{aligned}
\partial_t R_{ijk\ell} 
&= \nabla_i \nabla_k R_{j\ell} - \nabla_i \nabla_\ell R_{jk}
-\nabla_j \nabla_k R_{i\ell} + \nabla_j \nabla_\ell R_{ik} - R_{ijq\ell} R_{kq} \\ 
&\quad - R_{ijkq} R_{\ell q} + 2c \big(\nabla_i \nabla_k \phi \nabla_j \nabla_\ell \phi -\nabla_i \nabla_\ell \phi \nabla_j \nabla_k \phi \big) \\
&\quad - 2c \Scal{ {}^\mcl{N} \! \Rm(\nabla_i \phi,\nabla_j \phi) \nabla_k \phi, \nabla_\ell \phi}.
\end{aligned} \]
\[ \begin{aligned}
\partial_t R_{ij} 
&= \Delta_\ell R_{ij} - 2R_{iq} R_{jq} + 2R_{ipjq} R_{pq}
+ 2c \, \tau_{g,h} \phi \nabla_i \nabla_j \phi - 2c \nabla_p \nabla_i \phi \nabla_p \nabla_j \phi\\
&\quad \, + 2c R_{pijq} \nabla_p \phi \nabla_q \phi + 2c
\Scal{{}^\mcl{N} \! \Rm (\nabla_i\phi,\nabla_p\phi) \nabla_p \phi, \nabla_j \phi}. 
\end{aligned} \]
In these equations, we used 
\[ \Scal{ {}^\mcl{N} \! \Rm (\nabla_i\phi,\nabla_j\phi) \nabla_j\phi, \nabla_i\phi} 
:= {}^\mcl{N} \! R_{\kappa\mu\lambda\nu}\nabla_i \phi^\kappa\nabla_j\phi^\mu\nabla_i\phi^\lambda\nabla_j\phi^\nu, \]
and $k$ was a coordinate index, not a sequence index.

The types of terms that will appear in the expansions of \eqref{der-metric} and \eqref{der-map} therefore involve factors containing
\begin{equation}\label{eq:geom-quants}
\mrm{S}_k, \Rc_k, \Rm_k, \nabla_k \phi_k, \tau_{g_k,h} \phi_k, {}^\mcl{N} \! \Rm, 
\end{equation}
as well as time and covariant derivatives, whose norms we must show are bounded.  Note that we can ingore the geometric factors coming from the manifold $\mcl{N}$, since those quantities are bounded by compactness of $\mcl{N}$ and by the chain rule. 

Now, let us consider the case $p=1,q=0$ for \eqref{time-cov-met-bd} and \eqref{time-cov-map-bd}.  As in the proof of Lemma 3.11 in \cite{Chow2007}, we have
\begin{equation}\label{gamma-ineq}
\frac{1}{2} |\nabla g_k(t)|_k \leq |\Gamma_k - \Gamma|_k \leq \frac{3}{2} |\nabla g_k(t)|_k.
\end{equation}
That is, up to lowering/raising indices, the tensors $\nabla g_k(t)$ and $\Gamma_k - \Gamma$ are equivalent.  Using the evolution of the Christoffel symbols, an estimation in normal coodinates gives
\[ \left| \partial_t(\Gamma_k - \Gamma) \right|_k^2 
\leq 12|\nabla_k \Rc_k|_k^2 + 8 \overline{c} |\nabla_k^2 \phi_k|_k^2 |\nabla_k \phi_k|_k^2 \\
\leq C. \]
We can show that $|\Gamma_k - \Gamma|_k$ is bounded by integrating the above inequality:
\begin{align*}
C|t_1-t_0|
&\geq \int_{t_0}^{t_1} | \partial_t (\Gamma_k(t) - \Gamma)|_k \, dt \\
&\geq \left| \int_{t_0}^{t_1} \partial_t (\Gamma_k(t) - \Gamma) \, dt \right|_k \\
&\geq | \Gamma_k(t_1) - \Gamma|_k - |\Gamma_k(t_0) - \Gamma|_k.
\end{align*}
Since $t_1$ is arbitrary, we see that
\begin{align*}
|\Gamma_k(t) - \Gamma|_k 
&\leq C|t-t_0| + |\Gamma_k(t_0) - \Gamma|_k \\
&\leq C|t-t_0| + \frac{3}{2} |\nabla g_k(t_0)|_k \\
&\leq C|t-t_0| + \frac{3}{2} B |\nabla g_k(t_0)| \\
&\leq C.
\end{align*}
From this and \eqref{met-equiv-t} it follows that
\begin{align*}
|\nabla g_k(t)| 
&\leq C |\nabla g_k(t)|_k \\
&\leq C |\Gamma_k(t) - \Gamma|_k \\
&\leq C,
\end{align*}
and we also have
\begin{align*}
|\nabla \phi_k(t)| 
&\leq C |\nabla \phi_k(t)|_k \\
&\leq C \left( |(\nabla - \nabla_k) \phi_k(t)|_k + |\nabla_k \phi_k(t)|_k \right) \\
&\leq C \left( |\Gamma_k(t) - \Gamma|_k |\phi_k(t)| + |\nabla_k \phi_k(t)|_k \right) \\
&\leq C.
\end{align*}
This completes the case for $p=1$, $q=0$. 

The general case will follow once we bound the norms of the quantities listed in \eqref{eq:geom-quants} and their deriviatives.  For this we need several preliminary bounds:
\begin{align}
|\nabla^p \mrm{S}_k(t)| &\leq C |\nabla^p g_k(t)| + C', \label{eq:nabla-s} \\
|\nabla^p \phi_k(t)|    &\leq C'', \label{eq:nabla-phi} \\
|\nabla^p g_k(t)|       &\leq C'''. \label{eq:nabla-g} 
\end{align}
We prove these by induction.  Consider \eqref{eq:nabla-s}.  Since $\mcl{S} = \Rc - c \nabla \phi \otimes \nabla \phi$, we have
\begin{align*}
|\mrm{S}_k|_k 
&= |\Rc_k - c \nabla_k \phi_k \otimes \nabla_k \phi_k |_k \\
&\leq |\Rc_k|_k + \overline{c}|\nabla_k \phi_k|_k^2 \\
&\leq C, 
\end{align*}
and
\begin{align*}
|\nabla_k \mrm{S}_k|_k
&= |\nabla_k \Rc_k - \nabla_k(c \nabla_k \phi_k \otimes \nabla_k \phi_k)|_k \\
&= |\nabla_k \Rc_k - \dot{c} \nabla_k \phi_k \otimes \nabla_k \phi_k - c \nabla_k (\nabla_k \phi_k \otimes \nabla_k \phi_k)|_k \\
&\leq |\nabla_k \Rc_k |_k + |\dot{c}| |\nabla_k \phi_k \otimes \nabla_k \phi_k|_k + 2 \overline{c} | \nabla_k^2 \phi_k \otimes \nabla_k \phi_k |_k \\
&\leq C_1' + |\dot{c}| |\nabla_k \phi_k|_k^2 + 2 \overline{c} |\nabla_k^2 \phi_k|_k |\nabla_k \phi_k |_k \\
&\leq C.
\end{align*}
Now, we can use this to see that
\begin{align*}
|\nabla \mrm{S}_k|
&\leq C |\nabla \mcl{S}_k|_k \\
&\leq C |(\nabla - \nabla_k) \mcl{S}_k|_k + B^{3/2} |\nabla_k \mcl{S}_k|_k \\
&\leq C |\Gamma_k - \Gamma|_k |\mcl{S}_k|_k + B^{3/2} |\nabla_k \mcl{S}_k|_k \\
&\leq C,
\end{align*}
so the base case is complete.

Assume that \eqref{eq:nabla-s} holds for all $p < N$, and then consider $p = N$, for $N \geq 2$.  Using the difference of powers formula, we have
\begin{align*}
|\nabla^N \mrm{S}_k|
&= \left| \sum_{i=1}^N \nabla^{N-i}(\nabla - \nabla_k) \nabla_k^{i-1} \mrm{S}_k + \nabla_k^N \mrm{S}_k \right| \\
&\leq \sum_{i=1}^N | \nabla^{N-i}(\nabla - \nabla_k) \nabla_k^{i-1} \mrm{S}_k| + |\nabla_k^N \mrm{S}_k|.
\end{align*}
The goal now is to show that we can bound $|\nabla^{N-i}(\nabla - \nabla_k) \nabla^{i-1}_k\mcl{S}_k|$.  Recall that $\nabla - \nabla_k = \Gamma - \Gamma_k$ is a sum of terms of the form $\nabla g_k$.  In what follows, we will informally write this as $\sum \nabla g_k$.  

Now, suppose $i=1$.  Then using the product rule repeatedly, we have
\begin{align*}
|\nabla^{N-1}(\nabla - \nabla_k) \mrm{S}_k|
&= |\nabla^{N-1}(\sum \nabla g_k) \mrm{S}_k| \\
&= \left| \sum_{j=0}^{N-1} \binom{N-1}{j} \nabla^{N-1-j}(\sum \nabla g_k) \nabla^j \mrm{S}_k \right| \\
&\leq \sum_{j=0}^{N-1} \binom{N-1}{j} \sum| \nabla^{N-j} g_k | |\nabla^j \mrm{S}_k |. 
\end{align*}
Each term here is bounded by inductive hypothsis.

Similarly, for $2 \leq i \leq N$, we have
\begin{align*}
|\nabla^{N-i}(\nabla - \nabla_k) \nabla_k^{i-1} \mrm{S}_k|
&\leq \sum_{j=0}^{N-i} \binom{N-i}{j} \sum| \nabla^{N-i-j+1} g_k | |\nabla^j \nabla_k^{i-1} \mrm{S}_k |. 
\end{align*}
We need to estimate the last factor.  In general we have
\begin{align*}
|\nabla^j \nabla_k^{i} \mrm{S}_k |
&= | [(\nabla - \nabla_k) + \nabla_k]^j \nabla_k^i \mrm{S}_k | \\
&= \left| \sum_{l=0}^j \binom{j}{l} (\nabla - \nabla_k)^{j-l} \nabla_k^l \nabla_k^i \mrm{S}_k \right| \\
&\leq \sum_{l=0}^j \binom{j}{l} |\nabla - \nabla_k|^{j-l} |\nabla_k^{l+i} \mrm{S}_k | \\
&\leq \sum_{l=0}^j \binom{j}{l} \sum |\nabla g_k|^{j-l} |\nabla_k^{l+i} \mrm{S}_k |.
\end{align*}
This is also bounded by inductive hypothesis.  Putting it all together (the assumptions of the lemma, the inductive hypotheses, equivalence of the norms) we have the desired bounds.  

The same method can be used to verify \eqref{eq:nabla-phi}.

For \eqref{eq:nabla-g}, we have
\[ \partial_t \nabla^N g_k(t)
= \nabla^N \frac{\partial}{\partial t} g_k(t) 
= -2 \nabla^N \mcl{S}_k(t). \]
This implies
\begin{align*}
\partial_t |\nabla^N g_k |^2
&= 2 \left\langle \partial_t \nabla^N g_k, \nabla^N g_k \right\rangle \\
&\leq \left| \partial_t \nabla^N g_k(t) \right|^2 + |\nabla^N g_k(t)|^2 \\
&= 4 |\nabla^N \mcl{S}_k|^2 + |\nabla^N g_k(t)|^2 \\
&\leq C|\nabla^N g_k|^2 + D
\end{align*}
We can integrate this differential inequality to get
\[ |\nabla^N g_k(t)|^2 \leq C, \]
as desired.

Using the arguments above, one can show that
\[ |\nabla^p \nabla_k^q \Rc_k|, |\nabla^p \nabla_k^q \Rm_k|, |\nabla^p \nabla_k^q R_k|, |\nabla^p \nabla_k^q \mrm{S}_k|, |\nabla^p \nabla_k^q \phi_k| \]
are bounded, independent of $k$. 

Finally, we note that $\tau_{g_k,h} \phi_k$ and its derivatives have bounded norm.  This follows from $\tau_{g,h} \phi = g^{ij} \nabla_i \nabla_j \phi$.  

All terms are thus bounded, and we conclude that (\ref{der-metric}) and (\ref{der-map}) are as well.
\end{proof}

The second lemma, which is a corollary of the Arzela-Ascoli theorem is a modification of \cite{Chow2007}*{Corollary 3.15}.

\begin{lem}\label{lem:aa}
Let $(\mcl{M}^n,g)$ be a Riemannian manifold, with $\mcl{K} \subset \mcl{M}$ compact and $p \in \Z^{\geq 0}$.  Suppose $\{(g_k,\phi_k)\}$ is a sequence of Riemannian metrics on $\mcl{K}$ and maps $\map{\mcl{K}}{\mcl{N}}$, where $\mcl{N}$ is some fixed target manifold, such that
\[ \sup_{0 \leq \alpha \leq p+1} \max_{x \in \mcl{K}} |\nabla^{\alpha} g_k| \leq C_1 < \infty, \]
\[ \sup_{0 \leq \alpha \leq p+1} \max_{x \in \mcl{K}} |\nabla^{\alpha} \phi_k| \leq C_2 < \infty. \]
Addionally, suppose that there exists $\delta > 0$ such that $|V|_k \geq \delta |V|$ for all $V \in T\mcl{M}$.  Then there exists a subsequence $\{(g_{k_j,}\phi_{k_j})\}$, a Riemannian metric $g_{\infty}$ on $\mcl{K}$, and a smooth map $\fn{\phi_{\infty}}{\mcl{K}}{\mcl{N}}$ such that $\map{(g_{k_j},\phi_{k_j})}{(g_{\infty},\phi_{\infty})}$ in $C^p$ as $\map{k}{\infty}$.
\end{lem}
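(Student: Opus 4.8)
The plan is to reduce the statement to the classical Arzela-Ascoli principle in Euclidean space: a sequence of $\R^m$-valued maps on a fixed compact set whose $C^{p+1}$ norms are uniformly bounded has a subsequence converging in $C^p$. The preliminary step that makes this applicable is a conversion between the intrinsic bounds in the hypotheses --- stated via the background connection $\nabla$ and background norm $|\cdot|$ --- and ordinary coordinate bounds. Cover the compact set $\mcl{K}$ by finitely many coordinate charts $(\mcl{W}_a,x_a)$ of $\mcl{M}$ and pick compact $\mcl{K}_a\subset\mcl{W}_a$ whose interiors cover $\mcl{K}$; on each $\mcl{K}_a$ the Christoffel symbols of $g$ and all of their derivatives, as well as the components of $g$ and $g^{-1}$, are bounded. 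Writing schematically $\partial(\,\cdot\,)=\nabla(\,\cdot\,)+\Gamma\ast(\,\cdot\,)$ and iterating, an induction on the order shows: if $\{T_k\}$ is a sequence of tensor fields (or $\R^m$-valued functions) on $\mcl{K}$ with $|\nabla^\alpha T_k|\le C$ for all $\alpha\le p+1$, then the coordinate components of $T_k$ are uniformly bounded in $C^{p+1}(\mcl{K}_a)$ for each $a$.

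First I would apply this to $T_k=g_k$, obtaining uniform $C^{p+1}$ bounds on the components $(g_k)_{ij}$ in every chart. For the maps, fix (using compactness of $\mcl{N}$) a smooth embedding $\iota\colon\mcl{N}\hookrightarrow\R^L$, so $\iota(\mcl{N})$ is closed and all derivatives of $\iota$ are bounded on $\mcl{N}$. The composition $f_k:=\iota\circ\phi_k\colon\mcl{K}\to\R^L$ has real-valued components, and by the higher-order chain rule the background covariant derivatives $\nabla^\alpha f_k$ with $\alpha\le p+1$ are polynomial expressions in the $\nabla^\beta\phi_k$ ($\beta\le p+1$, uniformly bounded by hypothesis) and the derivatives of $\iota$ evaluated along $\phi_k$ (uniformly bounded since $\mcl{N}$ is compact); hence $|\nabla^\alpha f_k|\le C$ for $\alpha\le p+1$, and the observation above yields uniform $C^{p+1}$ bounds on the components of $f_k$. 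Now apply Arzela-Ascoli on $\mcl{K}_1$, pass to a further subsequence on $\mcl{K}_2$, and so on through the finitely many charts, to extract a single subsequence $\{k_j\}$ along which, on every $\mcl{K}_a$, the $(g_{k_j})_{ij}$ converge in $C^p$ and the components of $f_{k_j}$ converge in $C^p$. Uniqueness of limits forces the local limits to agree on overlaps, so they patch to a $C^p$ symmetric $(0,2)$-tensor $g_\infty$ on a neighborhood of $\mcl{K}$ and a $C^p$ map $F\colon\mcl{K}\to\R^L$.

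It remains to identify the limits. Passing $g_k(V,V)\ge\delta^2 g(V,V)$ --- which is exactly the hypothesis $|V|_k\ge\delta|V|$ --- to the limit gives $g_\infty(V,V)\ge\delta^2 g(V,V)>0$ for $V\ne0$, so $g_\infty$ is a genuine Riemannian metric (the bound $|g_k|\le C_1$ likewise passes to $g_\infty\le C_1 g$, so $g_\infty$ is uniformly equivalent to $g$). Since $f_{k_j}\to F$ holds in particular in $C^0$ and every $f_{k_j}(\mcl{K})\subset\iota(\mcl{N})$, a closed set, we get $F(\mcl{K})\subset\iota(\mcl{N})$; since $\iota$ is a smooth embedding and $F$ is $C^p$ with image in the submanifold $\iota(\mcl{N})$, the map $\phi_\infty:=\iota^{-1}\circ F\colon\mcl{K}\to\mcl{N}$ is well defined and $C^p$, and $\phi_{k_j}\to\phi_\infty$ in $C^p$ as maps into $\mcl{N}$. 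This gives the lemma (with $C^p$ regularity of the limit; if the hypotheses hold for every $\alpha$, a diagonal argument over $p$ upgrades $(g_\infty,\phi_\infty)$ to a smooth pair and the convergence to $C^\infty_{\mathrm{loc}}$).

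The argument is just Arzela-Ascoli, so no step is deep, but the bookkeeping concentrates in two places that I expect to be the main points requiring care. The first is the covariant-to-coordinate conversion of the opening paragraph: it must be set up as an induction in which at each order only already-controlled quantities (lower-order derivatives of the $T_k$ together with the fixed smooth background data) appear. The second is the manifold-valued target: the embedding detour seems unavoidable, and one must check carefully that the Euclidean limit $F$ really lands in $\iota(\mcl{N})$ and is $C^p$ as a map into $\mcl{N}$ --- this is where compactness of $\mcl{N}$ is genuinely used. By contrast, positive-definiteness of $g_\infty$ is immediate once one sees that this is the sole purpose of the hypothesis $|V|_k\ge\delta|V|$; without it the limit could degenerate, e.g. for $g_k=k^{-1}g$.
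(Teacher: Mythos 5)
Your proposal is correct and follows essentially the same route as the paper: convert the covariant bounds into coordinate $C^{p+1}$ bounds in finitely many charts covering $\mcl{K}$, apply Arzela--Ascoli componentwise, and use the lower bound $|V|_k \geq \delta |V|$ to see that the limit is nondegenerate. The one place you go beyond the paper is in handling the manifold-valued maps via an embedding $\iota\colon \mcl{N} \hookrightarrow \R^L$ (checking that the Euclidean limit lands in the closed image), where the paper simply works with the components $(\phi_k)^\lambda$ in charts; this is a sensible refinement rather than a different argument.
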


\begin{proof}
The existence of the subsequence will follow from the Arzela-Ascoli theorem, so we need to show that the collection of component functions $\{ (g_k)_{ab} \} \cup \{ (\phi_k)^{\lambda} \}$ is an equibounded and equicontinuous family.  Equiboundedness follows from the hypotheses.  

Now, in a fixed coordinate chart, by writing
\[ \nabla_a (g_k)_{bc} = \partial_a (g_k)_{bc} - \Gamma_{ab}^d (g_k)_{dc} - \Gamma_{ac}^d (g_k)_{bd} \]
we see that bounds on $|\nabla g_k|$ give bounds on $|\partial_a (g_k)_{bc}|$.  Similarly,
\[ |\nabla_a (\phi_k)^{\lambda}| = |\partial_a (\phi_k)^{\lambda}| \]
is assumed to be bounded.  Now, the mean value theorem for functions of several variables implies that 
\[ |(g_k)_{bc}(y) - (g_k)_{bc}(x)| \leq C_1 \diam(K), \]
for all $x,y \in K$ and all indices $b,c$, and similarly for components of $\phi_k$.  This means the family $\{(g_k)_{ab}\} \cup \{(\phi_k)^{\lambda}\}$ is equicontinuous in the chart.  Since $K$ is compact, we can take finitely many charts to see that there is a finite uniform bound.  Now apply the Arzela-Ascoli theorem to obtain the limits $g_{\infty}$ and $\phi_{\infty}$.  The bounds on the metrics imply that $g_{\infty}$ is also a metric, and clearly $\phi_{\infty}$ is smooth.  

We have only demonstrated subsequential convergence in $C^0$.  For $C^p$ convergence, repeat the same arguments starting with covariant derivatives of $g_k$ and $\phi_k$, obtaining bounds on the higher partial derivatives.
\end{proof}

\subsection{The proof of the theorem}\label{subsec:cpt-proof}

We will need a result of Hamilton, Theorem 2.3 in \cite{Hamilton1995}, which he used to prove the original compactness theorem for Ricci flow.

\begin{thm}\label{thm:ham-conv}
Let $\{(\mcl{M}^n_k,g_k,O_k)\}$ be a sequence of pointed, complete, Riemannian manifolds such that
\begin{itemize}
\item[(a)] the geometry is uniformly bounded:
\[ |\nabla_k^p \Rm_k|_k \leq C_p \]
on $\mcl{M}_k$, for all $p \geq 0$, all $k$, for $C_p$ independent of $k$;
\item[(b)] the injectivity radii are uniformly bounded below:
\[ \inj_k(O_k) \geq \iota_0 \> 0, \]
for some $\iota_0$ independent of $k$.
\end{itemize}
Then there is a subsequence such that 
\[ \mapl{(\mcl{M}_k,g_k,O_k)}{(\mcl{M}_{\infty},g_{\infty},O_{\infty})}, \]
where the limit is also a pointed, complete, Riemannian manifold.
\end{thm}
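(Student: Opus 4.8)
Since this is Hamilton's theorem, the plan is to recall the standard Cheeger--Gromov compactness argument (the exposition in \cite{Chow2007}*{Chapter 3} is the one we would follow). The strategy is: bounded geometry forces each metric ball $B_{g_k}(O_k,r)$ to be coverable by finitely many coordinate charts in which all metric components, together with the transition maps, are uniformly controlled to every order; one then diagonalizes over an exhausting sequence of radii to extract a subsequence converging in the sense of Definition \ref{def:rh-converge} (with the map part omitted). The two geometric inputs are that the hypothesis $\inj_k(O_k)\geq\iota_0$ together with the bound $|\nabla_k^p\Rm_k|_k\leq C_p$ propagates to a uniform lower bound $\inj_k(x)\geq\iota(r)>0$ for all $x\in B_{g_k}(O_k,r)$, and that these same bounds control, via Bishop--Gromov volume comparison, how many such charts are needed.

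First I would establish the injectivity radius propagation. From $\inj_k(O_k)\geq\iota_0$ and $|\Rm_k|\leq C_0$ one gets a lower bound on $\operatorname{vol}B_{g_k}(O_k,\iota_0)$, and then volume comparison forces a lower bound on $\operatorname{vol}B_{g_k}(x,1)$ for every $x\in B_{g_k}(O_k,r)$ depending only on $r$, $C_0$, and $n$; combined with $|\Rm_k|\leq C_0$ this yields $\inj_k(x)\geq\iota(r)>0$ by a standard argument. On each ball $B_{g_k}(x,\iota(r)/2)$ the exponential map is then a diffeomorphism onto its image, and in the resulting geodesic normal coordinates the higher derivative bounds $|\nabla_k^p\Rm_k|\leq C_p$ translate into bounds $|\partial^\alpha(g_k)_{ab}|\leq C'_{|\alpha|}$ with $g_k$ uniformly equivalent to the Euclidean metric.

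Next, fix an exhaustion by balls $B_{g_k}(O_k,r_j)$ with $r_j\to\infty$. For each $j$, cover $B_{g_k}(O_k,r_j)$ by finitely many of the above charts, the number and overlap combinatorics bounded independently of $k$ (volume comparison again); on overlaps the transition maps are uniformly controlled in $C^\infty$. Applying Arzel\`a--Ascoli to the metric components and transition functions---essentially Lemma \ref{lem:aa} with the map omitted, the lower bound $|V|_k\geq\delta|V|$ being supplied by the uniform equivalence to the Euclidean metric---extracts, for each fixed $j$, a subsequence along which everything converges in $C^\infty$ on compact subsets; a diagonal argument over $j$ produces one subsequence that works for all $j$. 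The limiting charts, glued by the limiting transition maps, assemble into a smooth manifold $\mcl{M}_\infty$ carrying a smooth metric $g_\infty$ and a marked point $O_\infty$; the $j$-th stage supplies the open set $\mcl{U}_j\subset\mcl{M}_\infty$, the embedding $\Psi_j\colon\mcl{U}_j\hookrightarrow\mcl{M}_k$ for the chosen subsequence, and the $C^\infty$ convergence $\Psi_j^*g_k\to g_\infty$ on $\mcl{U}_j$. Since each $\mcl{U}_j$ contains a ball $B_{g_\infty}(O_\infty,r_j-1)$ and these exhaust $\mcl{M}_\infty$, closed bounded subsets of $(\mcl{M}_\infty,g_\infty)$ are compact, so the limit is complete.

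The main obstacle is the bookkeeping in the gluing step: the charts, their overlaps, and the successive subsequences must be chosen coherently across both the exhaustion parameter $j$ and the manifold index $k$, so that the local $C^\infty$ limits actually patch into a single manifold and the convergence is genuinely of Cheeger--Gromov type. The geometrically substantive point, by contrast, is the injectivity radius propagation of the first step, without which the coordinate charts cannot even be defined uniformly; once uniform charts are in hand, the remainder is Arzel\`a--Ascoli together with careful indexing.
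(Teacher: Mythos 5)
This theorem is not proved in the paper at all: it is quoted verbatim as Hamilton's Theorem 2.3 from \cite{Hamilton1995}, so there is no internal proof to compare against. Your sketch is the standard Cheeger--Gromov/Hamilton argument (injectivity radius propagation via volume comparison, uniform charts from the bounds on $|\nabla_k^p \Rm_k|$, Arzel\`a--Ascoli plus diagonalization, then gluing), and it is correct in outline and consistent with the source the paper cites; the only caveat is that it remains a sketch, with the chart-gluing bookkeeping and the derivation of uniform $C^\infty$ metric bounds in normal coordinates from covariant curvature bounds left to the references rather than carried out.
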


We will also need the derivative estimate for the curvature and the map, Theorem 6.10 in \cite{Muller2012}.  This is a version of the Bernstein-Bando-Shi estimates for Ricci flow (see \cite{ChowKnopf2004}*{Section 7.1} for exposition).

\begin{thm}\label{thm:muller-bbs}
Let $(\mcl{M}^n,g(t),\phi(t))$ solve \textsc{hrf} for $t \in [0,\omega)$ and $c(t)$ non-in\-creasing.  Assume $0 < \underline{c} \leq c(t) \leq \overline{c} < \infty$ for all $t$, and that $\omega < \infty$.  Suppose that the curvature is uniformly bounded:
\[ \sup_{\mcl{M} \times [0,\omega)} |\Rm| \leq R_0. \]
Then there exists a constant $C = C(\underline{c},\overline{c},R_0,T,m,N) < \infty$ such that
\[ \sup_{\mcl{M} \times (0,\omega)} |\nabla \phi|^2 \leq \frac{C}{t}, \]
\[ \sup_{M \times (0,\omega)} \Big( |\Rm|^2 + |\nabla^2 \phi|^2 \Big) \leq \frac{C^2}{t^2}. \]
Moreover, there exist constants $C_p$ depending on $p, \overline{c}$, $m$ and $N$ such that
\[ \sup_{M \times (0,\omega)} \Big( |\nabla^p \Rm|^2 + |\nabla^{p+2} \phi|^2 \Big) \leq C_p \left( \frac{C}{t} \right)^{p+2}. \]
\end{thm}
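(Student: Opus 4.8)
The statement is the Bernstein--Bando--Shi--type smoothing estimate for the $(RH)_c$ flow (proved in \cite{Muller2009}; compare the Ricci flow case in \cite{ChowKnopf2004}*{Section 7.1}). Every bound allows blow-up as $t \to 0^+$, so in each case the mechanism is the parabolic maximum principle applied to a suitably $t$-weighted combination of the relevant quantities, the weight being chosen so that the combination vanishes at the initial time. \textbf{Step 1 (gradient of the map).} First I would bound $|\nabla\phi|^2$. A computation using $\partial_t g = -2\Rc + 2c\,\nabla\phi\otimes\nabla\phi$ and $\partial_t\phi = \tau_{g,h}\phi$, together with the Bochner formula on $\phi^*T\mcl{N}$, produces an evolution inequality of the schematic form
\[ \partial_t |\nabla\phi|^2 \;\leq\; \Delta|\nabla\phi|^2 \;-\; 2|\nabla^2\phi|^2 \;+\; C\,|\nabla\phi|^2, \]
where $C$ depends on $R_0$, on $\overline{c}$, and on $\sup_{\mcl{N}}|{}^{\mcl{N}}\!\Rm|$ (finite since $\mcl{N}$ is closed), and where the sign hypothesis $\dot c \leq 0$ is used to discard the $-2c|\nabla\phi|^4$ term (and a $-\dot c$-term of favourable sign). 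Since nothing is assumed about $\nabla\phi$ at $t=0$, I would apply the maximum principle to $F = t\,|\nabla\phi|^2$, for which $\partial_t F \leq \Delta F + C F$ after dropping $-2t|\nabla^2\phi|^2 \leq 0$, and conclude $\sup_{\mcl{M}\times(0,\omega)} t|\nabla\phi|^2 \leq C$. If $\mcl{M}$ is noncompact one first multiplies by a cutoff built from the distance function, using $|\Rm|\leq R_0$ to control its Laplacian, and then exhausts $\mcl{M}$.

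\textbf{Step 2 (second order).} Next, using Müller's evolution equations for $|\Rm|^2$ and $|\nabla^2\phi|^2$ (recorded in part in the excerpt above), each of which has a good leading Laplacian and a negative term ($-2|\nabla\Rm|^2$, resp.\ $-2|\nabla^3\phi|^2$) plus reaction terms at worst cubic in $|\Rm|, |\nabla^2\phi|, |\nabla\phi|$, I would run the maximum principle on a Shi-type test function of the schematic form
\[ G \;=\; t^2\big(|\Rm|^2 + |\nabla^2\phi|^2\big) \;+\; a\, t\,|\nabla\phi|^2 \;+\; b, \]
with $a,b$ large depending on $R_0,\overline{c},\omega$, chosen so that the correction terms $-2a\,t|\nabla^2\phi|^2$ and $-a|\nabla\phi|^2$ absorb the cross terms while $|\Rm|^2 \leq R_0^2$ and Step 1 control the rest. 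This gives $\sup_{\mcl{M}\times(0,\omega)} t^2\big(|\Rm|^2 + |\nabla^2\phi|^2\big) \leq C^2$ after enlarging $C$.

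\textbf{Step 3 (induction).} For the higher-order estimates I would induct on $p$: assuming $\sup t^{q+2}\big(|\nabla^q\Rm|^2 + |\nabla^{q+2}\phi|^2\big) \leq C_q$ for all $q<p$, apply the maximum principle to
\[ G_p \;=\; t^{p+2}\big(|\nabla^p\Rm|^2 + |\nabla^{p+2}\phi|^2\big) \;+\; K\, t^{p+1}\big(|\nabla^{p-1}\Rm|^2 + |\nabla^{p+1}\phi|^2\big), \]
with $K$ large in terms of the earlier constants. Müller's evolution and commutator formulas guarantee that the non-manifestly-good terms in $\partial_t\big(|\nabla^p\Rm|^2 + |\nabla^{p+2}\phi|^2\big)$ are either lower order (controlled by induction) or absorbable using the negative terms $-2t^{p+2}\big(|\nabla^{p+1}\Rm|^2 + |\nabla^{p+3}\phi|^2\big)$ from the Laplacians and $-2K\,t^{p+1}\big(|\nabla^p\Rm|^2 + |\nabla^{p+2}\phi|^2\big)$ from the correction term; this closes the induction and yields the stated bound.

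\textbf{Main obstacle.} The real difficulty is the bookkeeping in Steps 2--3: deriving and organizing the evolution equations for $|\nabla^p\Rm|^2$ and $|\nabla^{p+2}\phi|^2$ under the coupled system, checking that every occurrence of the target curvature ${}^{\mcl{N}}\!\Rm$ and its covariant derivatives (bounded by compactness of $\mcl{N}$) appears multiplied only by controlled powers of the lower derivatives of $\phi$, that the $\dot c$-terms have the right sign or are otherwise harmless given $\underline{c} \leq c \leq \overline{c}$ and $\dot c \leq 0$, and that the weights $a, b, K, C_q$ can be chosen in the correct order so that every bad term is absorbed. Once the evolution equations are in hand this is routine, and it is carried out in \cite{Muller2009}.
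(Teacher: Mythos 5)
First, a bookkeeping point: the paper does not prove this statement at all — it is quoted verbatim as Theorem 6.10 of \cite{Muller2009}, so the only fair comparison is with M\"{u}ller's own argument. Your Steps 2 and 3 are a faithful outline of that argument's higher-order part: Shi-type interior estimates via the maximum principle applied to $t$-weighted combinations with lower-order correction terms, with the target geometry controlled by compactness of $\mcl{N}$. That part is fine in outline.

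Your Step 1, however, contains a genuine gap. The evolution inequality you write, $\partial_t|\nabla\phi|^2 \leq \Delta|\nabla\phi|^2 - 2|\nabla^2\phi|^2 + C|\nabla\phi|^2$, is not correct for a general closed target: the reaction term coming from ${}^{\mcl{N}}\!\Rm$ is $-2\langle{}^{\mcl{N}}\!\Rm(\nabla_i\phi,\nabla_j\phi)\nabla_j\phi,\nabla_i\phi\rangle$, which is \emph{quartic} in $|\nabla\phi|$ and of no favourable sign (this is exactly the obstruction that forces Eells--Sampson to assume nonpositive target curvature). A maximum-principle argument on $t|\nabla\phi|^2$ with a quartic reaction term does not close, and you cannot linearize the quartic term without the very bound you are trying to prove. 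The tell is that your sketch never uses the hypothesis $\underline{c}>0$, which is essential. M\"{u}ller's actual mechanism is to consider $S=\tr_g\mrm{S}=R-c|\nabla\phi|^2$, for which the coupling produces the identity $\partial_t S=\Delta S+2|\mrm{S}_{ij}|^2+2c|\tau_{g,h}\phi|^2-\dot c\,|\nabla\phi|^2$; since $\dot c\leq 0$ and $2|\mrm{S}_{ij}|^2\geq\frac{2}{n}S^2$, the maximum principle gives $S\geq-\frac{n}{2t}$, hence $c|\nabla\phi|^2\leq R+\frac{n}{2t}$, and then $|\Rm|\leq R_0$, $\underline{c}>0$ and $\omega<\infty$ yield $|\nabla\phi|^2\leq C/t$. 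In other words, the good quartic term $-2c|\nabla\phi\otimes\nabla\phi|^2$ supplied by the metric evolution is what defeats the bad quartic target-curvature term, but only through this particular combination; your Step 1 needs to be replaced by this argument before Steps 2--3 can proceed.
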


Now we prove the theorem, in the presense of a bound on the injectivity radius.  The proof of the groupoid statement will appear in the next subsection.

\begin{proof}[Proof of Theorem \ref{thm:compactness}]
First, note that we may use a diagonalization argument, as in \cite{Hamilton1995}*{Section 2}, to show that we can assume that the interval of existence of the solutions is finite in length, that is, 
\[ -\infty < \alpha < \omega < \infty. \]

Since we are assuming that the curvatures are uniformly bounded, Theorem \ref{thm:muller-bbs} applies to give uniform bounds on the derivatives of the curvatures and on the derivatives of the maps $\phi_k$.  With the former, and with the injectivity radius bound, we can use Theorem \ref{thm:ham-conv} to get pointed subsequential convergence of the metrics at a single time, say $0 \in (\alpha,\omega)$:
\[ \map{\big( \mcl{M}_k,g_k(0),O_k \big)}{\big( \mcl{M}_{\infty},g_{\infty},O_{\infty} \big)}. \]
The limit is a complete, pointed Riemannian manifold.

Unpacking this convergence, we have the existence of
\begin{itemize}
\item an exhaustion $\{\mcl{U}_k\}$ of $\mcl{M}_{\infty}$ by open sets with $O_{\infty} \in \mcl{U}_k$ for all $k$, and
\item a family of pointed diffeomorphisms $\{ \fn{\Psi_k}{(\mcl{U}_k,O_\infty)}{(\mcl{V}_k,O_k) \subset \mcl{M}_k} \}$
\end{itemize}
such that
\[ \mapl{\big( \mcl{U}_k, \Psi_k^* g_k(0)|_{\mcl{V}_k} \big)}{\left( \mcl{M}_{\infty}^n, g_{\infty} \right)} \]
uniformly in $C^{\infty}$ on compact sets.  

We want to extend this convergence to all times $t \in [\alpha,\omega]$, and also obtain convergence of the maps to some limit $\phi_\infty(t)$.  To this end, the metrics and maps we now consider are $\bar{g}_k(t) = \Psi_k^* g_k(t)$ and $\bar{\phi}_k(t) = \Psi_k^* \phi_k(t)$.

Now we see that the hypotheses of the Lemma \ref{lem:tech} are satisfied.  For any compact $\mcl{K} \subset \mcl{M}_\infty$ and $[\beta,\psi] \subset [\alpha,\omega]$ containing $0$, the collection $\{(\bar{g}_k(t),\bar{\phi}_k(t))\}$ is a sequence of \textsc{hrf} solutions on $K \times [\beta,\psi]$.  Let $g_\infty$ be the background metric and $t_0 = 0$.

The uniform convergence implies that the $\bar{g}_k(0)$ are uniformly equivalent to $g_\infty$, and that the needed bounds hold.  For example, using the equivalence of metrics and convergence at one time, we see that
\begin{align*}
|\nabla_\infty^p \bar{\phi}_k(0)|_\infty
&\leq C |\nabla_\infty^p \bar{\phi}_k(0)|_{\bar{g}_k(0)} \\
&\leq C |\nabla_{\bar{g}_k(0)}^p \bar{\phi}_k(0)|_{\bar{g}_k(0)} \\
&\leq C |\nabla_{g_k(0)}^p \phi_k(0)|_{g_k(0)} \\
&\leq C,
\end{align*}
for large enough $k$.

By the lemma, we conclude that $\bar{g}_k(t)$ are uniformly equivalent to $g_\infty$ on $\mcl{K} \times [\beta,\psi]$, and that the time and space derivatives of $\bar{g}_k(t)$ and $\bar{\phi}_k(t)$ are uniformly bounded with respect to $g_\infty$.

Now, the conditions of Lemma \ref{lem:aa} are exactly satisfied by the implications of Lemma \ref{lem:tech}, so we have the desired subsequential convergence.  Our limit solution is defined by
\[ g_\infty(t) = \lim_{k \rightarrow \infty} \bar{g}_k(t), \quad 
\phi_\infty(t) = \lim_{k \rightarrow \infty} \bar{\phi}_k(t). \]

Finally, since all derivatives of the metric and the of map converge, the appropriate tensors converge, so that the limit is a metric/smooth map solving \textsc{hrf}.
\end{proof}

\section{The flow on groupoids}\label{sec:grpd}

Riemannian groupoids can be used to understand collapse of manifolds as in the context of geometric flows.  For example, Lott and Glickenstein use this framework to explain the collapsing behavior of Ricci flow on Lie groups in low dimensions \cites{Glick2008,Lott2007}.  Here we decribe how harmonic-Ricci flow can be defined in terms of groupoids, and then prove the last part of Theorem \ref{thm:compactness}.  For more information on groupoids, see the above papers, or, \cites{Mackenzie2005,Moerdijk2003} for example.

Recall that a Lie groupoid $\mcl{G} \rightrightarrows \mcl{B}$ is \textit{Riemannian} if the base $\mcl{B}$ has a $\mcl{G}$-invariant metric $g$.  That is, if $\mcl{U} \subset \mcl{B}$ is open, $\fn{\sigma}{\mcl{U}}{\mcl{G}}$ is any local bisection, and $\fn{t}{\mcl{G}}{\mcl{B}}$ is the target map, then $(t \circ \sigma)^*g = g$.  From this, we can construct the Ricci tensor $\Rc[g]$, which is a symmetric $(2,0)$-tensor on $\mcl{B}$, and which is $\mcl{G}$-invariant in the same sense as $g$.  Ricci flow on this groupoid is just what we expect:
\[ \frac{\partial}{\partial t} g = -2 \Rc. \]

Let $(\mcl{N},h)$ be another Riemannian manifold, thought of as a trivial groupoid, and consider $\fn{\phi}{\mcl{B}}{\mcl{N}}$ such that $\nabla \phi \otimes \nabla \phi$ is a $\mcl{G}$-invariant $(2,0)$-tensor on $\mcl{B}$.  Additionally, the tension field $\tau_{g,h} \phi$ of $\phi$ is well-defined in the usual Riemannian manifold sense.  Therefore, we have a well-defined coupling of Ricci flow and harmonic map flow:
\[ \begin{aligned}
\frac{\partial}{\partial t} g    &= -2 \Rc + 2 c \nabla \phi \otimes \nabla \phi \\
\frac{\partial}{\partial t} \phi &= \tau_{g,h} \phi
\end{aligned} \]
where $c(t)$ is a non-negative coupling function.

To use this approach to understand limits and convergence of \textsc{hrf} on Riemannian manifolds, we show how this groupoid setting can arise from the manifold setting.  Let $(\mcl{M},g)$ and $(\mcl{N},h)$ be complete Riemannian manifolds, and $\fn{\phi}{\mcl{M}}{\mcl{N}}$ a smooth map.  Select $\{p_i\}_{i \in I} \subset \mcl{M}$ such that $\msc{U} = \{\mcl{U}_i\}_{i \in I}$ is an open cover of $\mcl{M}$, where the $\mcl{U}_i$ are such that $\exp_{p_i}(0) = p_i \in \mcl{U}_i$, and 
\[ \fnl{\exp_{p_i}|_{B_{r_i}(0)}}{B_{r_i}(0)}{\mcl{U}_i} \]
is a diffeomorphism, for some sufficiently small $r_i>0$.  Put the metric $(\exp_{p_i})^* g$ on each $B_{r_i}(0)$.  Call $\msc{U}$ an \textit{open exponential cover} of $\mcl{M}$.

As in \cite{Lott2007}*{Example 5.7}, from this we form a Riemannian groupoid $\mcl{G}^\msc{U} \rightrightarrows \mcl{B}^\msc{U}$, which is isometrically equivalent to the trivial groupoid $(\mcl{M},g)$.  Set
\[ \mcl{B}^\msc{U} = \bigsqcup_{i \in I} B_{r_i}(0) = \setv{(i,v)}{i \in I, v \in B_{r_i}(0)}, \]
\[ \mcl{G}^\msc{U} = \bigsqcup_{i,j \in I} \setv{(v_i,v_j) \in B_{r_i}(0) \times B_{r_j}(0)}{\exp_{p_i}(v_i) = \exp_{p_j}(v_j)}. \]
We will write elements of $\mcl{B}^\msc{U}$ as $v_i = (i,v)$ and arrows as $(v_i,v_j)$.  Note that we always have $v_i = \exp_{p_i}^{-1}(x)$ for some $x \in U_i$.

The structure maps of this groupoid are defined as follows:
\begin{itemize}
\item source: $s(v_i,v_j) = v_i$
\item target: $t(v_i,v_j) = v_j$
\item unit: $\mbf{1}(v_i) = (v_i,v_i)$
\item inverse: $(v_i,v_j)^{-1} = (v_j,v_i)$
\item composition: $(v_j,v_k) \cdot (v_i,v_j) = (v_i,v_k)$
\end{itemize}

Call the \'{e}tale Riemannian groupoid $\mcl{G}^\msc{U} \rightrightarrows \mcl{B}^\msc{U}$ the \textit{Riemannian exponential groupoid} with respect to the open cover $\msc{U}$ of $\mcl{M}$.

\begin{prop}\label{prop:rh-grpoid-setting}
The \textsc{hrf} on a manifold $(\mcl{M},g,\phi)$ and target manifold $(\mcl{N},h)$ becomes \textsc{hrf} on the $n$-dimensional Riemannian exponential groupoid $(\mcl{G}^\msc{U} \rightrightarrows \mcl{B}^\msc{U},g,\phi)$ associated to an open exponential cover $\msc{U}$ of $\mcl{M}$.
\end{prop}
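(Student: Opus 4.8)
The plan is to verify that the three geometric ingredients of the $(RH)_c$ system --- the metric $g$, the symmetric $(2,0)$-tensor $\nabla\phi\otimes\nabla\phi$, and the tension field $\tau_{g,h}\phi$ --- descend to well-defined $\mcl{G}^\msc{U}$-invariant objects on the base $\mcl{B}^\msc{U}$, so that $(RH)_c$ flow is meaningful on the groupoid in the sense defined above, and then to observe that the resulting flow equations are, chart by chart, exactly the pullback of \eqref{eq:rhc-flow} on $\mcl{M}$ by the exponential maps.

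First I would recall from \cite{Lott2007}*{Example 5.7} that $\mcl{G}^\msc{U}\rightrightarrows\mcl{B}^\msc{U}$ is an $n$-dimensional étale Lie groupoid isometrically equivalent to the trivial groupoid $(\mcl{M},g)$: an arrow $(v_i,v_j)$ records the identification $\exp_{p_i}(v_i)=\exp_{p_j}(v_j)$, so a local bisection through it is the transition map $\exp_{p_j}^{-1}\circ\exp_{p_i}$ between overlapping exponential charts. Each such transition map is a diffeomorphism between open subsets of $\R^n$ and is an isometry for the metrics $(\exp_{p_i})^*g$ and $(\exp_{p_j})^*g$; hence the metric $\bigsqcup_i(\exp_{p_i})^*g$ on $\mcl{B}^\msc{U}$ is $\mcl{G}^\msc{U}$-invariant and the groupoid is isometrically equivalent to $(\mcl{M},g)$. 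Next, setting $\phi_i=\phi\circ\exp_{p_i}$ on $B_{r_i}(0)$, the same identification gives $\phi_i=\phi_j\circ(\exp_{p_j}^{-1}\circ\exp_{p_i})$ on overlaps, so $\fn{\phi}{\mcl{B}^\msc{U}}{\mcl{N}}$ satisfies $(t\circ\sigma)^*\phi=\phi$ for every local bisection $\sigma$. Because the transition maps are isometries, every tensor built naturally from $g$ and $\phi$ --- in particular $\Rc[g]$, $\nabla\phi\otimes\nabla\phi$, $\mrm{S}=\Rc-c\,\nabla\phi\otimes\nabla\phi$, and $\tau_{g,h}\phi=\tr_g\nabla^2\phi$ as in \eqref{eq:tension} --- is $\mcl{G}^\msc{U}$-invariant in the same sense, so each expression appearing in \eqref{eq:rhc-flow} is a well-defined object on the groupoid. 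This is exactly the data required by the definition of $(RH)_c$ flow on a Riemannian groupoid.

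Finally I would note that $(RH)_c$ flow is a local, geometrically natural system, so a family $\big(g(t),\phi(t)\big)$ solves \eqref{eq:rhc-flow} on $\mcl{M}$ if and only if, for every $i$, the pulled-back family $\big((\exp_{p_i})^*g(t),\phi\circ\exp_{p_i}(t)\big)$ solves \eqref{eq:rhc-flow} on $B_{r_i}(0)$ --- under the diffeomorphism $\exp_{p_i}$ onto the open set $\mcl{U}_i$ the quantities $\Rc$, $\nabla\phi\otimes\nabla\phi$ and $\tau_{g,h}\phi$ transform tautologically. Assembling these local solutions over $i$, and using that the arrows of $\mcl{G}^\msc{U}$ are fixed (independent of $t$) while the transition maps remain isometries for the evolving metric by naturality, shows that $\big(\bigsqcup_i(\exp_{p_i})^*g(t),\phi\big)$ is precisely a solution of $(RH)_c$ flow on $\mcl{G}^\msc{U}\rightrightarrows\mcl{B}^\msc{U}$, and the correspondence is reversed by descent. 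The step I expect to require the most care is this last one: checking that the étale and Riemannian structures of $\mcl{G}^\msc{U}$ are compatible with the flow \emph{for all time}, i.e.\ that $\mcl{G}^\msc{U}$-invariance of the metric and the map is preserved along \eqref{eq:rhc-flow}. This is ultimately a consequence of the naturality of the system under diffeomorphisms together with the fact that the arrows are time-independent, but it is the point where the argument must be made precise rather than taken for granted.
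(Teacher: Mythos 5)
Your proposal is correct and follows essentially the same route as the paper: both arguments reduce to checking that $\phi$ descends to a groupoid morphism and that $\nabla\phi\otimes\nabla\phi$ (along with the other tensors in \eqref{eq:rhc-flow}) is $\mcl{G}^\msc{U}$-invariant, using the fact that local bisections are realized by the transition maps between exponential charts. The only difference is presentational --- the paper verifies the invariance of $d\phi_0^\alpha$ and the pulled-back frame by explicit coordinate computation where you appeal to naturality under the isometric transition maps --- and your closing remark about invariance being preserved in time is a reasonable extra observation rather than a divergence in method.
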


\begin{proof} The map $\fn{\phi}{\mcl{M}}{\mcl{N}}$ induces a Lie groupoid morphism $\phi = (\phi_0,\phi_1)$ from $\mcl{G}^{\msc{U}} \rightrightarrows \mcl{B}^{\msc{U}}$ to the trivial groupoid $\mcl{N} \rightrightarrows \mcl{N}$.  It is defined by
\[ \begin{aligned} 
\phi_0(v_i)     &= \phi(\exp_{p_i}(v_i)), \\
\phi_1(v_i,v_j) &= \phi(\exp_{p_i}(v_i)) = \phi(\exp_{p_j}(v_j)). \end{aligned} \]
Thus we can write $\phi_0 = \phi_1 = \exp^* \phi$ for these induced maps.  Note also that we could have defined them as
\[ \begin{aligned}
\phi_0(v_i)     &= \phi_0(\exp_{p_i}^{-1}(x)) = \phi(x), \\
\phi_1(v_i,v_j) &= \phi_1(\exp_{p_i}^{-1}(x),\exp_{p_j}^{-1}(x)) = \phi(x). \end{aligned} \]

It is easy to check that these maps are compatible with the structure maps of both groupoids.  That is, the following diagram is commutative.
\[ \xymatrixcolsep{3pc}  \xymatrixrowsep{2.5pc} \xymatrix{
\mcl{B}^\msc{U} \ar[d]_{\phi_0} \ar@{.>}[r]^{\mbf{1}_\mcl{G}} & \mcl{G}^\msc{U} \ar[d]^{\phi_1} \ar@/_1pc/[l]_{s_\mcl{G}} \ar@/^1pc/[l]_{t_\mcl{G}}   \\
\mcl{N}                         \ar@{.>}[r]^{\mbf{1}_\mcl{N}} & \mcl{N}                         \ar@/_1pc/[l]_{s_\mcl{N}} \ar@/^1pc/[l]_{t_\mcl{N}}
} \]
 
The main question is the $\mcl{G}$-invariance of $\nabla \phi_0 \otimes \nabla \phi_0$.  Let $\mcl{U}_i \subset \mcl{M}$ have coordinates $(x^i)$, and let a neighborhood $\mcl{V}_i$ of $\phi(p_i)$ have coordinates $(y^\alpha)$.  Then $B_{r_i}(0) \subset \mcl{B}^{\msc{U}}$ has coordinates $(z^i)$, where
\[ z^i = \exp_{p_i}^* x^i = x^i \circ \exp_{p_i}, \]
and a coframe on $T B_{R_i}(0)$ is $dz^i$, where
\[ dz^i = \exp_{p_i}^* dx^i = d(x^i \circ \exp_{p_i}). \]
To understand invariance, we must understand bisections of $\mcl{G}^\msc{U} \rightrightarrows \mcl{B}^\msc{U}$.  Let  $\sigma$ be a bisection, say
\fndef{\sigma}{B_{r_i}(0)}{\mcl{G}^\msc{U}}{v_i}{\big( \sigma_1(v_i),\sigma_2(v_i) \big)}
Since it is a bisection, we have $s \circ \sigma = \id_{\mcl{B}^{\msc{U}}}$, and this implies $\sigma_1 = \id_{\mcl{B}^{\msc{U}}}$.  Therefore we write
\[ \sigma(v_i) = \big( v_i,\tilde{\sigma}(v_i) \big), \]
where $\tilde{\sigma}(v_i)$ satisfies 
\[ \exp_{p_i} \tilde{\sigma}(v_i) = \exp_{p_i} (v_i). \]
Now we see that
\[ (t \circ \sigma)(v_i) = t \big( v_i,\tilde{\sigma}(v_i) \big) = \tilde{\sigma}(v_i), \]
or $t \circ \sigma = \tilde{\sigma}$.  


Now, the induced map $\fn{\phi_0}{\mcl{B}^\msc{U}}{\mcl{N}}$ has pushforward
\[ (\phi_0)_* \in C^\infty \big( T^*\mcl{B}^\msc{U} \otimes (\phi_0)^* T\mcl{N} \big), \]
so 
\[ (\phi_0)_* = \frac{\partial \phi_0^\alpha}{\partial x^i} \, dx^i \otimes \left( \frac{\partial}{\partial y^\alpha} \right)_{\phi_0} = d\phi_0^\alpha \otimes \left( \frac{\partial}{\partial y^\alpha} \right)_{\phi_0}. \]
In any $B_{r_i}(0)$, we have
\begin{align*}
(t \circ \sigma)^* d\phi_0^\alpha
&= d(\phi_0^\alpha \circ  t \circ \sigma) \\
&= d(\phi^\alpha \circ \exp_{p_i} \circ \tilde{\sigma}) \\
&= d(\phi^\alpha \circ \exp_{p_i}) \\
&= d\phi_0^\alpha.
\end{align*}
If $\fn{f_0}{B_{r_i}(0)}{\R}$ is smooth, locally it is of the form $f_0 = f \circ \exp_{p_i}$ for some $\fn{f}{\mcl{U}_i}{\R}$.  Then
\begin{align*}
(t \circ \sigma)_* \left( \frac{\partial}{\partial y^\alpha} \right)_{\phi_0} f_0
&=\tilde{\sigma}_* \left( \frac{\partial}{\partial y^\alpha} \right)_{\phi_0} f_0 \\
&= \partial_\alpha( f_0 \circ \tilde{\sigma}) \\
&= \partial_\alpha( f \circ \exp_{p_i} \circ \tilde{\sigma}) \\
&= \partial_\alpha( f \circ \exp_{p_i}) \\
&= \left( \frac{\partial}{\partial y^\alpha} \right)_{\phi_0} f_0.
\end{align*}
From this, we conclude that $\nabla \phi_0 (\phi_0)_*$ is a $\mcl{G}^\msc{U}$-invariant tensor.  

In general, a metric $h$ on $T\mcl{N}$ induces a metric $h_{\phi}$ on the pull-back bundle $\phi^* T\mcl{N}$, given by
\[ h_{\phi}(\xi,\eta) = h(\phi_* \xi,\phi_* \eta), \]
for all $\xi,\eta \in T\mcl{M}$.  In this way, we get a metric on $(\phi_0)^* T\mcl{N}$, and it is $\mcl{G}^\msc{U}$-invariant:
\begin{align*}
(t \circ \sigma)^* h_{\phi_0}(\xi,\eta)
&= h_{\phi_0}(\xi,\eta).
\end{align*}

Thus $\nabla \phi_0 \otimes \nabla \phi_0$ is a $(2,0)$-tensor on $\mcl{B}^\msc{U}$: 
\[ \nabla \phi_0 \otimes \nabla \phi_0 = (h_{\phi_0})_{\lambda \mu} \partial_i \phi_0^\lambda \partial_j \phi_0^\mu \, dz^i \otimes dz^j. \]
It is therefore $\mcl{G}^\msc{U}$-invariant, and \textsc{hrf} makes sense on $\mcl{G}^\msc{U} \rightrightarrows \mcl{B}^{\msc{U}}$.
\end{proof}

This proposition shows that this framework is at least non-vacuous.  Before completing the proof of Theorem \ref{thm:compactness}, we need a definition and a result of Lott.

\begin{defn}
Let $\{(\mcl{G}_k \rightrightarrows \mcl{B}_k,g_k,\phi_k,O_{x_k})\}$ be a sequence of pointed, $n$-di\-men\-sion\-al
Riemannian groupoids with maps into some fixed Riemannian manifold $(\mcl{N},h)$.  Let $\{(\mcl{G}_\infty \rightrightarrows \mcl{B}_\infty,g_\infty,\phi_\infty,O_{x_\infty})\}$ be a pointed Riemannian groupoid with map $\fn{\phi_\infty}{\mcl{B}_\infty}{\mcl{N}}$.  Let $J_1$ be the groupoid of $1$-jets of local diffeomorphisms of $\mcl{B}_\infty$.  We say that
\[ (\mcl{G}_k \rightrightarrows \mcl{B}_k,g_k,\phi_k,O_{x_k}) \longrightarrow (\mcl{G}_\infty \rightrightarrows \mcl{B}_\infty,g_\infty,\phi_\infty,O_{x_\infty}) \] 
in the \textit{pointed smooth topology} if for all $R>0$, the following hold.
\begin{itemize}
\item There are pointed diffeomorphisms $\fn{\Psi_{k,R}}{B_R(O_{x_\infty})}{B_R(O_{x_k})}$, defined for large $k$, so that
\[ \Psi_{k,R}^* g_k|_{B_R(O_{x_i})} \longrightarrow g_\infty|_{B_R(O_{x_\infty})}. \]
\[ \Psi_{k,R}^* \phi_k|_{B_R(O_{x_i})} \longrightarrow \phi_\infty|_{B_R(O_{x_\infty})}. \]
\item After conjugating by $\Psi_{k,R}$, the images of 
\[ s_k^{-1} \big( \overline{B_{R/2}(O_{x_k})} \big) \cap t_k^{-1} \big( \overline{B_{R/2}(O_{x_k})} \big) \]
converge in $J_1$ in the Hausdorff sense to the image of
\[ s_\infty^{-1} \big( \overline{B_{R/2}(O_{x_\infty})} \big) \cap t_\infty^{-1} \big( \overline{B_{R/2}(O_{x_\infty})} \big) \]
in $J_1$.
\end{itemize}
\end{defn}

The following is \cite{Lott2007}*{Proposition 5.8}.

\begin{thm}\label{thm:lott-conv}
Let $\{(\mcl{M}_k,g_k,O_k)\}$ be a sequence of pointed complete $n$-di\-men\-sion\-al Riemannian manifolds.  Suppose that for each $p \geq 0$ and $r > 0$, there is some $C_{p,r} < \infty$ such that for all $k$,
\[ \max_{B_R(O_i)} | \nabla^p \Rm_k |_{\infty} \leq C_{p,r}. \]
Then there is a subsequence of $\{(M_k,O_k)\}$ that converges to some pointed $n$-di\-men\-sion\-al Riemannian groupoid $(G_\infty \rightrightarrows B_\infty, g_\infty, O_{x_\infty})$ in the pointed smooth topology.
\end{thm}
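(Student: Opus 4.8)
This statement is Lott's (\cite{Lott2007}*{Proposition 5.8}), so one option is simply to quote it; here is the shape of an argument. The plan is to reduce the groupoid statement to the Arzela-Ascoli machinery of Lemma \ref{lem:aa}, applied chart by chart, together with a separate compactness argument for the overlap (arrow) data.

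First I would cover each $\mcl{M}_k$ by an open exponential cover $\msc{U}_k$ and form the associated Riemannian exponential groupoid $\mcl{G}_k \rightrightarrows \mcl{B}_k$, as in the construction preceding Proposition \ref{prop:rh-grpoid-setting}, which is isometrically equivalent to $(\mcl{M}_k,g_k)$. The uniform bounds $|\nabla^p \Rm_k| \leq C_{p,r}$ on $B_R(O_k)$ imply, by standard comparison geometry, that one may take every chart radius equal to a single $r_0 > 0$ on the tangent-space side, and that in these exponential coordinates the pulled-back metrics $\exp_{p_{k,i}}^* g_k$ together with all their derivatives are uniformly bounded and uniformly non-degenerate on $B_{r_0}(0) \subset \R^n$ --- exactly the hypotheses of Lemma \ref{lem:aa} for a metric (with no map). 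A packing argument, again from the curvature bound, shows that a number $N(R)$ of such charts, with $N(R)$ independent of $k$, suffices to cover $B_R(O_k)$, so after relabeling one may speak of ``the $i$-th chart of $\mcl{M}_k$'' uniformly in $k$, with the first chart centered at $O_k$.

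Next I would extract, by a diagonal subsequence over the chart index and over $R$, two limits. By Lemma \ref{lem:aa} the metrics $\exp_{p_{k,i}}^* g_k$ converge in $C^\infty$ on compact subsets of $B_{r_0}(0)$ to metrics $g_\infty^{(i)}$, producing the disjoint union of balls underlying $\mcl{B}_\infty$. Simultaneously the arrows of $\mcl{G}_k$ joining the $i$-th and $j$-th charts form pairs $(v,w)$ with $\exp_{p_{k,i}}(v) = \exp_{p_{k,j}}(w)$; each component is the graph of a local isometry between uniformly controlled metrics, hence is determined by its $1$-jet, and the uniform bounds confine these $1$-jets to a compact subset of the $1$-jet groupoid $J_1$. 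Thus, along a further subsequence, the closed arrow sets converge in the Hausdorff topology; call the limit $\mcl{G}_\infty$. One then checks that the glued metric $g_\infty$ is $\mcl{G}_\infty$-invariant (invariance is closed under $C^\infty_{\mathrm{loc}}$ convergence of metrics together with Hausdorff convergence of the arrow $1$-jets), that composability and the groupoid axioms pass to the limit (composition of $1$-jets is continuous, and identity and inverse $1$-jets survive), and that $\mcl{G}_\infty$ is \'{e}tale (every limiting arrow is the $1$-jet of a local isometry, so $s$ and $t$ are local diffeomorphisms). Retaining the first chart gives the basepoint, and completeness follows from the uniform $r_0$. The diffeomorphisms $\Psi_{k,R}$ and the Hausdorff convergence of the arrow sets demanded by the definition of the pointed smooth topology are then read off directly from this construction.

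The main obstacle is the groupoid bookkeeping rather than any hard estimate: organizing the countably many charts so that ``the $i$-th chart'' is meaningful for all $k$ at once, bounding $N(R)$ uniformly in $k$, and --- above all --- verifying that the Hausdorff limit of the arrow sets is a bona fide \'{e}tale groupoid (closed under composition, containing inverses, with $s$ and $t$ local diffeomorphisms) and not merely a pseudogroup or a degenerate limit, and that the resulting convergence is exactly convergence in the pointed smooth topology. The analytic input --- curvature bounds force uniform control in exponential charts, and Arzela-Ascoli then gives subsequential $C^\infty_{\mathrm{loc}}$ convergence --- is routine once Lemma \ref{lem:aa} is available; this is why the statement is quoted from \cite{Lott2007} rather than reproved here.
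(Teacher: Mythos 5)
The paper itself gives no proof of this statement: it is quoted verbatim as \cite{Lott2007}*{Proposition 5.8}, so your primary move (cite Lott) is exactly what the paper does, and your sketch is a reasonable reconstruction of Lott's argument rather than something the paper attempts.

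One point in the sketch deserves care, because it is the crux of why the limit is a groupoid and not a manifold. You propose to use the paper's open exponential cover construction with a single uniform radius $r_0$, but that construction requires $\exp_{p_i}|_{B_{r_i}(0)}$ to be a \emph{diffeomorphism}, and with only curvature bounds (no injectivity radius bound) no uniform such $r_0$ exists in the collapsed case. The correct move, and the one your later steps implicitly rely on, is to take $r_0$ below the conjugate radius (controlled by the curvature bound), so that $\exp_{p_i}$ is merely an immersion on $B_{r_0}(0)$; the pulled-back metric is still smooth and uniformly controlled there, and the failure of injectivity is recorded as nontrivial arrows (self-overlaps of a single chart), which is precisely the source of a non-unit isotropy in the limit groupoid. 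With that correction, the rest of your outline --- the Bishop--Gromov packing bound for $N(R)$, Arzela--Ascoli chart by chart, compactness of the arrow data in the $1$-jet groupoid $J_1$ because local isometries between uniformly controlled metrics are determined by and uniformly bounded in their $1$-jets, and closure of the groupoid axioms under Hausdorff convergence of $1$-jets --- matches the structure of Lott's proof.
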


Now we can complete the proof.

\begin{proof}[Proof of Theorem \ref{thm:compactness}]
As Lott mentions, there is very little difference between the proofs of Hamilton's original theorem and  \cite{Lott2007}*{Theorem 5.12}.  The same is true here.  Namely, using Theorem \ref{thm:muller-bbs}, we obtain uniform bounds on the derivatives of the curvatures, which allow us to use Theorem \ref{thm:lott-conv}.  This is a version of Theorem \ref{thm:ham-conv} for groupoids, and gives subsequential convergence at one time to a pointed Riemannian groupoid.  

To extend this to the whole time interval, we apply Lemma \ref{lem:tech} and a version of \ref{lem:aa}, which gives another convergent subsequence.  Hence we get a limiting metric and map, which together solve \textsc{hrf} on $\mcl{M}$. 
\end{proof}

\begin{rem}
As in \cite{Lott2007}*{Section 5}, Theorem \ref{thm:compactness} implies that the space of pointed $n$-dimensional \textsc{hrf} solutions with $\sup _t t | \Rm[g(t)] |_\infty < C$ is relatively compact among all \textsc{hrf} solutions on \'etale Riemannian groupoids.  Let $\msc{S}_{n,C}$ be the closure of this space.  It is easy to see that the blowdown procedure from Example \ref{ex:blowdown} defines an $\R^+$-action on the compact space $\msc{S}_{n,C}$.
\end{rem}

\section{A detailed example of the flow}\label{sec:ex}

In this section we provide a example of \textsc{hrf} on the three-dimensional nilpotent Lie group $\Nil^3$.  Ricci flow has been studied extensively on this space, as it is relatively easy to solve the Ricci flow equations explicitly, see \cites{IsenbergJackson1992,KnopfMcLeod2001,Glick2008}.  Study of the flow on this space, and homogeneous spaces in general, is facilitated by the fact that, due to preservation of isometry groups, Ricci flow is a system of \textsc{ode}.  By carefully selecting a map from $\Nil^3$, we are able to reduce harmonic-Ricci flow to a similar \textsc{ode} system, and compare the corresponding asympotics with those for Ricci flow solutions.  

Recall on the group
\[ \Nil^3 \cong \setvbigl{\begin{pmatrix} 1 & x & z \\ 0 & 1 & y \\ 0 & 0 & 1 \end{pmatrix}}{x,y,z \in \R} \subset \SL_3 \R, \]
left-invariant solutions of Ricci flow of the form 
\begin{equation}\label{eq:metric}
g(t) = A(t) \, \theta^1 \otimes \theta^1 + B(t) \, \theta^2 \otimes \theta^2 + C(t) \, \theta^3 \otimes \theta^3,
\end{equation}
have the following asympototics:
\begin{equation}\label{eq:RF-soln}
\begin{aligned}
A(t) &\sim A_0 K^{-1/3} t^{1/3}, \\
B(t) &\sim B_0 K^{-1/3} t^{1/3}, \\
C(t) &\sim C_0 K^{1/3} t^{-1/3},
\end{aligned}
\end{equation}
for the constant $K = A_0 B_0/3 C_0$.

We want to study \textsc{hrf} on $\Nil^3$.  Consider a function 
\[ \fnl{\phi}{\big( \Nil^3,g(t) \big)}{(\R,g_{\can})}, \]
and let $c=c(t) \geq 0$ be a non-increasing function.  For the resulting \textsc{hrf} system to remain a system of ordinary differential equations for the metric, we would like $\phi$ to be harmonic and $\nabla \phi \otimes \nabla \phi = d\phi \otimes d\phi$ to be a diagonal left-invariant tensor.  It is not hard to see that the latter condition requires that 
\[ \phi(x,y,z) = ax + by,  \]
for some $a, b \in \R$.  Note that such a function is also a group homomorphism, and that
\[ \tau_{g,g_{\can}} \phi = g^{ij}(\partial_i \partial_j \phi - \Gamma_{ij}^k \partial_k \phi) = 0, \]
so it is harmonic.  Then
\[ d\phi \otimes d\phi = a^2 \, dx \otimes dx + ab (dx \otimes dy + dy \otimes dx) + b^2 \, dy \otimes dy. \]
To keep the system diagonal, take $b = 0$, so that $d\phi \otimes d\phi = a^2 \, \theta^1 \otimes \theta^1$.  The \textsc{hrf} system is
\begin{equation}\label{eq:RH}
\begin{aligned}
\frac{d}{dt} A &= \frac{C}{B} + 2 a^2 c, \\
\frac{d}{dt} B &= \frac{C}{A}, \\
\frac{d}{dt} C &= -\frac{C^2}{AB}.
\end{aligned}
\end{equation}

Let us first make a few general observations about the long-time behavior of $A$, $B$, and $C$.  Set $f(t) = 2a^2 c(t)$ for simplicity.  Note that $\Phi = BC = B_0 C_0$ is conserved, $A$ and $B$ are increasing, and $C$ is decreasing.  This implies
\[ C' = -\frac{C^2}{AB} \geq - \frac{1}{A_0 B_0} C^2, \]
and integrating tells us that
\[ 0 < \frac{A_0 B_0 C_0}{A_0 B_0 + C_0 t} \leq C(t) \leq C_0, \]
for $t \geq 0$.  We conclude that $C(t) \rightarrow C_\infty \in [0,C_0)$ as $t \rightarrow \infty$.  Similarly, we see that
\[ C' = - \frac{C^3}{\Phi A} \geq - \frac{1}{\Phi A_0} C^3, \]
which implies
\[ 0 < \frac{A_0 B_0 C_0^2}{A_0 B_0 + 2 C_0^2 t} \leq C(t)^2 \leq C_0^2. \]
This gives 
\begin{equation}\label{eq:C2-lim}
\int_0^t C(s)^2 \, ds \geq \int_0^t \frac{A_0 B_0 C_0^2}{A_0 B_0 + 2 C_0 s} ds \longrightarrow \infty 
\end{equation}
as $t \rightarrow \infty$.  

Next we use $\Phi$ to see that
\begin{equation}\label{eq:Aprime-var}
A' = \frac{C^2}{\Phi} + f = \frac{\Phi}{B^2} + f,
\end{equation}
which we integrate to obtain
\begin{equation}\label{eq:A} 
A(t) = A_0 + \frac{1}{\Phi} \int_0^t C(s)^2 \, ds + \int_0^t f(s) \, ds.
\end{equation}
By \eqref{eq:C2-lim}, we have $A(t) \rightarrow \infty$ as $t \rightarrow \infty$, and we have a bound on the growth of $A$:
\begin{equation}\label{eq:A-bound}
A(t) \leq A_0 + \frac{C_0^2}{\Phi} \int_0^t ds + f_0 \int_0^t \, ds \leq A_0 + \left( \frac{C_0}{B_0} + f_0 \right) t.
\end{equation}
This implies
\begin{equation}\label{eq:intA-lim}
\int_0^t \frac{ds}{A(s)} \geq \int_0^t \frac{ds}{A_0 + \left( \frac{C_0}{B_0} + f_0 \right) s} \longrightarrow \infty 
\end{equation}
as $t \rightarrow \infty$.  

Finally,
\[ (B^2)' = \frac{2 \Phi}{A}, \]
which implies
\begin{equation}\label{eq:Bsquared}
B(t)^2 = B_0^2 + 2 \Phi \int_0^t \frac{ds}{A(s)},
\end{equation}
so, by \eqref{eq:intA-lim}, $B(t) \rightarrow \infty$ and $C(t) = \Phi/B(t) \rightarrow 0$ as $t \rightarrow \infty$.

\subsection{Constant coupling function}

Let us now consider the case when $c$ (and therefore $f$, which we write as $f_0$) is a constant.  From \eqref{eq:Aprime-var} we compute that
\[ \lim_{t \rightarrow \infty} \frac{A(t)}{f_0 t} \stackrel{LH}{=} \lim_{t \rightarrow \infty} \frac{A'(t)}{f_0}
= \lim_{t \rightarrow \infty} \left( \frac{C^2}{f_0 \Phi} + 1 \right) = 1, \]
so $A(t) \sim f_0 t$.  Using \eqref{eq:Bsquared} and $A \sim f_0 t \sim f_0 (t+1)$, we have
\[ B^2 \sim 2 \Phi \int_0^t \frac{ds}{A(s)} \sim \frac{2 \Phi}{f_0} \int_0^t \frac{ds}{s + 1} \sim \frac{2 \Phi}{f_0} \log t. \]
This gives the following.

\begin{prop}\label{prop:RH-const}
Solutions of \textsc{hrf} on $\Nil^3$ of the form \eqref{eq:metric} with map $\phi(x,y,z) = ax$ and $c>0$ constant have the following asympototics:
\begin{equation}\label{eq:RH-const-soln}
\begin{aligned}
A(t) &\sim 2a^2 c t, \\
B(t) &\sim \sqrt{\frac{B_0 C_0}{a^2 c} \log t}, \\
C(t) &\sim 2 \sqrt{\frac{a^2 c B_0 C_0}{\log t}}.
\end{aligned}
\end{equation}
\end{prop}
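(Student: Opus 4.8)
The plan is to derive the three asymptotics by bootstrapping off the general estimates already in hand -- the conservation law $\Phi = BC = B_0C_0$, the integral formulas \eqref{eq:A} and \eqref{eq:Bsquared}, and the fact (established just before this subsection) that $C(t)\to 0$ as $t\to\infty$ -- together with two applications of L'H\^opital's rule. Everything is driven by the single input $C(t)\to 0$.

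First I would pin down $A$. Writing $A'=C^2/\Phi+f_0$ as in \eqref{eq:Aprime-var}, the right-hand side tends to $f_0=2a^2c$ because $C\to 0$; since also $A(t)\to\infty$ (by \eqref{eq:A}) and $t\to\infty$, L'H\^opital gives
\[ \lim_{t\to\infty}\frac{A(t)}{f_0 t}=\lim_{t\to\infty}\frac{A'(t)}{f_0}=1, \]
so $A(t)\sim 2a^2c\,t$. Next I would substitute this into \eqref{eq:Bsquared}. The integrand $1/A(s)$ is continuous on $[0,\infty)$ and $A(s)\sim f_0 s$, so both $\int_0^t ds/A(s)$ and $\log t$ diverge (cf.\ \eqref{eq:intA-lim}), and a second L'H\^opital argument yields
\[ \lim_{t\to\infty}\frac{1}{\log t}\int_0^t\frac{ds}{A(s)}=\lim_{t\to\infty}\frac{t}{A(t)}=\frac{1}{f_0}. \]
Hence, by \eqref{eq:Bsquared}, $B(t)^2\sim\frac{2\Phi}{f_0}\log t$; substituting $\Phi=B_0C_0$ and $f_0=2a^2c$ gives the stated asymptotic for $B$. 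The asymptotic for $C$ then follows immediately from the conservation law $C(t)=B_0C_0/B(t)$.

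The computations are essentially mechanical; the only delicate point is ensuring that the behavior of $1/A(s)$ near $s=0$ does not corrupt the logarithmic growth in the second step. But this is automatic: $1/A$ is bounded on $[0,T]$ for any fixed $T$ while $\log t\to\infty$, which is exactly what legitimizes the L'H\^opital argument (alternatively, one sandwiches $(f_0-\epsilon)s\le A(s)\le(f_0+\epsilon)s$ for $s$ large, integrates, divides by $\log t$, and lets $\epsilon\to0$). So I do not anticipate a genuine obstacle -- Proposition~\ref{prop:RH-const} is really a corollary of the general estimates together with $C(t)\to0$.
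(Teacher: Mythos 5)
Your argument is correct and follows essentially the same route as the paper: L'H\^opital on $A/(f_0t)$ using $C\to 0$ to get $A\sim 2a^2c\,t$, substitution of this into \eqref{eq:Bsquared} to get $B^2\sim(2\Phi/f_0)\log t$, and the conservation law $C=\Phi/B$ for the last asymptotic; your extra care with the integral near $s=0$ is just a cleaner justification of the paper's $A\sim f_0(t+1)$ device. Note only that $C=\Phi/B$ yields $C\sim\sqrt{a^2cB_0C_0/\log t}$ \emph{without} the factor of $2$ appearing in \eqref{eq:RH-const-soln}; that factor appears to be a typo in the statement, and your computation (like the paper's own) does not produce it.
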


Note that if we attempt to take a limit of these solutions as $f_0 \rightarrow 0$, they \textit{do not} converge in a naive sense to the solutions of Ricci flow from \eqref{eq:RF-soln}.  To explain this, we examine certain coupling functions that decay as $t \rightarrow \infty$, and which yield behavior similar to that for Ricci flow.

\subsection{Nonconstant coupling function}

Now consider a coupling function such that
\[ c(t) \sim \frac{1}{t^r}, \]
where $r \geq 1$.  We make the ansatz that $A(t) \sim \alpha t^p$, for some $\alpha,p > 0$ to be determined.  From \eqref{eq:A-bound}, it is consistent to assume that $0 < p \leq 1$.  Then using \eqref{eq:A},
\begin{equation}\label{eq:A-limit}
\lim_{t \rightarrow \infty} \frac{A(t)}{\alpha t^p} 
\stackrel{LH}{=} \lim_{t \rightarrow \infty} \frac{A'(t)}{p \alpha t^{p-1}}
= \lim_{t \rightarrow \infty} \frac{1}{p \alpha t^{p-1}} \left( \frac{\Phi}{B^2} + \frac{2a^2}{t^r} \right).
\end{equation}
Finding this limit comes down to analyzing two limits:
\begin{equation}\label{eq:term1}
\lim_{t \rightarrow \infty} \frac{1}{B^2 t^{p-1}}, 
\end{equation}
\begin{equation}\label{eq:term2}
\lim_{t \rightarrow \infty} \frac{1}{t^{r+p-1}}. 
\end{equation}
Since $r \geq 1$ implies that \eqref{eq:term2} is zero for any $p > 0$, we need that
\[ B \sim \beta t^{\frac{1-p}{2}} \]
for some $\beta > 0$.  To find $\beta$, consider
\[ B^2 \sim 2 \Phi \int_0^t \frac{ds}{A(s)} \sim \frac{2 \Phi}{\alpha} \int_0^t \frac{ds}{(s + 1)^p} \sim \frac{2 \Phi}{\alpha(1-p)} t^{1-p}. \]
This now implies
\[ \lim_{t \rightarrow \infty} \frac{1}{B^2 t^{p-1}} = \frac{\alpha(1-p)}{2 \Phi}, \]
and so
\[ 1 \stackrel{?}{=} \lim_{t \rightarrow \infty} \frac{A(t)}{\alpha t^p} 
= \frac{\Phi}{p \alpha} \lim_{t \rightarrow \infty} \frac{1}{B^2 t^{p-1}}
= \frac{\Phi}{p \alpha} \frac{\alpha(1-p)}{2 \Phi}
= \frac{1-p}{2p}. \]
For $A(t) \sim \alpha t^p$ we therefore need $p=1/3$.  From here, we obtain the asymptotic behavior.  Modulo constants, it is that of the Ricci flow solutions \eqref{eq:RF-soln}.

\begin{prop}\label{prop:RH-r}
Solutions of \textsc{hrf} on $\Nil^3$ of the form \eqref{eq:metric} with map $\phi(x,y,z) = ax$ and $c \sim 1/t^r$, $r \geq 1$, have the following asympototics:
\begin{equation}\label{eq:RH-r-soln}
\begin{aligned}
A &\sim \alpha t^{1/3}, \\
B &\sim \sqrt{\frac{3\Phi}{\alpha}} t^{1/3}, \\
C &\sim \sqrt{\frac{\alpha \Phi}{3}} t^{-1/3}, 
\end{aligned}
\end{equation}
for some constant $\alpha$ depending on $r$ and the initial data.
\end{prop}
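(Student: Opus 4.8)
The plan is to build on the qualitative analysis already carried out before the statement. By that point we know that $\Phi := BC \equiv B_0 C_0$ is conserved along \eqref{eq:RH}, that $A$ and $B$ increase to $+\infty$ while $C$ decreases to $0$, and we have the integral identities \eqref{eq:A}, \eqref{eq:Bsquared} and the linear upper bound \eqref{eq:A-bound}. Granting for the moment the power-law ansatz $A(t) \sim \alpha t^{1/3}$, the asymptotics of $B$ and of $C = \Phi/B$ fall out of the computation already displayed: substituting $A \sim \alpha t^p$ with $0 < p < 1$ into \eqref{eq:Bsquared} gives $B^2 \sim \tfrac{2\Phi}{\alpha(1-p)}\,t^{1-p}$, the L'H\^{o}pital step in \eqref{eq:A-limit} (with the term \eqref{eq:term2} vanishing because $r \ge 1$) forces $\tfrac{1-p}{2p} = 1$, i.e. $p = 1/3$, and back-substitution yields $B \sim \sqrt{3\Phi/\alpha}\,t^{1/3}$ and $C \sim \sqrt{\alpha\Phi/3}\,t^{-1/3}$. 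The case $p = 1$ is ruled out because it would force $B^2 \asymp \log t$, which contradicts the L'H\^{o}pital limit; and $\alpha$ is only determined implicitly, since both $\int_0^\infty f$ in \eqref{eq:A} and the nonlinear feedback through $B$ enter it, which is why it is phrased as ``depending on $r$ and the initial data.''

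The real work, and the main obstacle, is justifying the ansatz: the L'H\^{o}pital chain above presupposes that $A(t)/t^{1/3}$ actually converges to a positive limit. First I would establish the two-sided a priori bound $A(t) \asymp t^{1/3}$. The upper bound refines \eqref{eq:A-bound} once one knows that both $\Phi^{-1}\int_0^t C^2\,ds$ and $\int_0^t f\,ds$ are $O(t^{1/3})$, which is a short bootstrap off \eqref{eq:A} and \eqref{eq:Bsquared}; the lower bound comes from $A'(t) = \Phi/B^2 + f \ge \Phi/B^2$ together with the crude estimate $B^2 = O(t)$ obtained from \eqref{eq:Bsquared} and $A \ge A_0$. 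These bounds then upgrade to $B \asymp t^{1/3}$ and $C \asymp t^{-1/3}$.

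To turn $A \asymp t^{1/3}$ into a genuine asymptotic I would exploit the clean identity following from \eqref{eq:RH}: for $P := A B^2$ one has $P' = 3\Phi + f B^2$, hence $P(t) = P_0 + 3\Phi t + \int_0^t f B^2\,ds$. The a priori bounds give $f B^2 = O(t^{2/3 - r})$, whose integral is $o(t)$ even in the borderline case $r = 1$ (where $\int_0^t f\,ds$ itself only grows logarithmically), so $A B^2 \sim 3\Phi t$. Separately, dropping the now-negligible $f$-term and integrating the logarithmic-derivative relation $\tfrac{d}{dt}\log(A/B^2) = -\Phi/(AB^2) + f/A$ --- whose right side equals $-\tfrac{1}{3t}$ plus an integrable error, by $AB^2 \sim 3\Phi t$ --- gives $A/B^2 \sim \mu\,t^{-1/3}$ for some $\mu > 0$. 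Combining $AB^2 \sim 3\Phi t$ with $A/B^2 \sim \mu t^{-1/3}$ yields $A^2 \sim 3\mu\Phi\,t^{2/3}$, i.e. $A \sim \alpha t^{1/3}$ with $\alpha = \sqrt{3\mu\Phi}$, and the formulas for $B$ and $C$ from the first paragraph follow. The delicate point throughout is controlling the coupling term $f$ uniformly for all $r \ge 1$; once that is done, the argument is parallel to the $f \equiv 0$ case treated in Proposition~\ref{prop:RF}, which is exactly why \eqref{eq:RH-r-soln} agrees with \eqref{eq:RF-soln} up to the constants.
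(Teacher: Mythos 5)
Your first paragraph is essentially the paper's entire proof: the paper also simply posits the ansatz $A\sim\alpha t^{p}$, feeds it through \eqref{eq:Bsquared} to get $B^{2}\sim\tfrac{2\Phi}{\alpha(1-p)}t^{1-p}$, and uses the L'H\^{o}pital consistency condition $\tfrac{1-p}{2p}=1$ (with \eqref{eq:term2} killed by $r\ge 1$) to force $p=1/3$, leaving $\alpha$ undetermined. Where you differ is that you treat the justification of the ansatz as the main task, which the paper does not attempt at all; your third paragraph is a genuine improvement. The identity $P'=3\Phi+fB^{2}$ for $P=AB^{2}$ is correct, and combined with $\tfrac{d}{dt}\log(A/B^{2})=-\Phi/(AB^{2})+f/A$ it does upgrade the formal matching into an actual asymptotic, with all error terms integrable once one knows $B^{2}=O(t^{2/3})$.

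The soft spot is the second paragraph. As literally described, neither a priori bound closes. For the lower bound, $A'\ge\Phi/B^{2}$ together with the crude $B^{2}=O(t)$ gives only $A\gtrsim\log t$, not $A\gtrsim t^{1/3}$; and iterating ($A\gtrsim\log t\Rightarrow B^{2}\lesssim t/\log t\Rightarrow A\gtrsim(\log t)^{2}\Rightarrow\cdots$) gains only one logarithm per step and never reaches a power of $t$. The upper bound has the mirror-image circularity: showing $\int_{0}^{t}C^{2}\,ds=O(t^{1/3})$ already requires $B^{2}\gtrsim t^{2/3}$. The fix is to run your $P$-identity \emph{first}: $P'\ge 3\Phi$ gives $AB^{2}\ge 3\Phi t+P_{0}$, whence $(\log B^{2})'=2\Phi/(AB^{2})\le 2/(3t)+O(t^{-2})$ and so $B^{2}=O(t^{2/3})$; then $A=P/B^{2}\gtrsim t^{1/3}$, and the matching upper bounds follow from \eqref{eq:A} and \eqref{eq:Bsquared}. (Note also that $AB^{2}\sim 3\Phi t$ itself needs only $B^{2}=o(t)$, which follows from $A\to\infty$ already proved in the text, so there is no circularity once the steps are reordered.) With that repair your argument is complete and strictly more rigorous than the one in the paper.
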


The reason that the limit as $f_0 \rightarrow 0$ of the solutions \eqref{eq:RH-const-soln}  is not the Ricci flow solutions \eqref{eq:RF-soln} lies in the integrability of the coupling function.  Informally, the lack of such a limit results from
\[ 0 = \lim_{t \rightarrow \infty} \lim_{f_0 \rightarrow 0} \int_0^t f_0 \, ds \ne \lim_{f_0 \rightarrow 0} \lim_{t \rightarrow \infty} \int_0^t f_0 \, ds = \infty. \]
To be more precise, consider $A(t)$ as given in \eqref{eq:A}.  When $r > 1$, $f(t)$ is integrable, allowing
\[ \int_0^t C(s)^2 \, ds \] 
to dominate and produce growth like $t^{1/3}$.  When $r<1$, $f(t)$ is not integrable, and 
\[ \int_0^t f(s) \, ds \]
dominates to produce linear growth.  

In numerical simulations, $0 < r < 1$ appears to be a transitionary region where solutions have properties of both \eqref{eq:RH-const-soln} and \eqref{eq:RH-r-soln}.  We were unable to obtain the precise asymptotics, but we expect that letting $r \rightarrow 0$ should recover \eqref{eq:RH-const-soln} and letting $r \rightarrow \infty$ should recover \eqref{eq:RF-soln}. 


\begin{bibdiv}
\begin{biblist}

\bib{Besse2008}{book}{
      author={Besse, Arthur~L.},
       title={Einstein manifolds},
      series={Classics in Mathematics},
   publisher={Springer-Verlag},
     address={Berlin},
        date={2008},
        ISBN={978-3-540-74120-6},
        note={Reprint of the 1987 edition},
}

\bib{Chow2007}{book}{
      author={Chow, Bennett},
      author={Chu, Sun-Chin},
      author={Glickenstein, David},
      author={Guenther, Christine},
      author={Isenberg, James},
      author={Ivey, Tom},
      author={Knopf, Dan},
      author={Lu, Peng},
      author={Luo, Feng},
      author={Ni, Lei},
       title={The {R}icci flow: techniques and applications. {P}art {I}},
      series={Mathematical Surveys and Monographs},
   publisher={American Mathematical Society},
     address={Providence, RI},
        date={2007},
      volume={135},
        ISBN={978-0-8218-3946-1; 0-8218-3946-2},
        note={Geometric aspects},
}

\bib{ChowKnopf2004}{book}{
      author={Chow, Bennett},
      author={Knopf, Dan},
       title={The {R}icci flow: an introduction},
      series={Mathematical Surveys and Monographs},
   publisher={American Mathematical Society},
     address={Providence, RI},
        date={2004},
      volume={110},
        ISBN={0-8218-3515-7},
}

\bib{DeTurck1983}{article}{
      author={DeTurck, Dennis~M.},
       title={Deforming metrics in the direction of their {R}icci tensors},
        date={1983},
        ISSN={0022-040X},
     journal={J. Differential Geom.},
      volume={18},
      number={1},
       pages={157\ndash 162},
  url={http://projecteuclid.org.ezproxy.lib.utexas.edu/getRecord?id=euclid.jdg/1214509286},
}

\bib{Glick2008}{article}{
      author={Glickenstein, David},
       title={Riemannian groupoids and solitons for three-dimensional
  homogeneous {R}icci and cross-curvature flows},
        date={2008},
        ISSN={1073-7928},
     journal={Int. Math. Res. Not. IMRN},
      number={12},
       pages={Art. ID rnn034, 49},
}

\bib{Hamilton1995}{article}{
      author={Hamilton, Richard~S.},
       title={A compactness property for solutions of the {R}icci flow},
        date={1995},
        ISSN={0002-9327},
     journal={Amer. J. Math.},
      volume={117},
      number={3},
       pages={545\ndash 572},
         url={http://dx.doi.org/10.2307/2375080},
}

\bib{Hamilton1993}{incollection}{
      author={Hamilton, Richard~S.},
       title={The formation of singularities in the {R}icci flow},
        date={1995},
   booktitle={Surveys in differential geometry, {V}ol.\ {II} ({C}ambridge,
  {MA}, 1993)},
   publisher={Int. Press, Cambridge, MA},
       pages={7\ndash 136},
}

\bib{Hamilton1995-sing}{incollection}{
      author={Hamilton, Richard~S.},
       title={The formation of singularities in the {R}icci flow},
        date={1995},
   booktitle={Surveys in differential geometry, {V}ol.\ {II} ({C}ambridge,
  {MA}, 1993)},
   publisher={Int. Press, Cambridge, MA},
       pages={7\ndash 136},
}

\bib{IsenbergJackson1992}{article}{
      author={Isenberg, James},
      author={Jackson, Martin},
       title={Ricci flow of locally homogeneous geometries on closed
  manifolds},
        date={1992},
        ISSN={0022-040X},
     journal={J. Differential Geom.},
      volume={35},
      number={3},
       pages={723\ndash 741},
         url={http://projecteuclid.org/getRecord?id=euclid.jdg/1214448265},
}

\bib{KnopfMcLeod2001}{article}{
      author={Knopf, Dan},
      author={McLeod, Kevin},
       title={Quasi-convergence of model geometries under the {R}icci flow},
        date={2001},
        ISSN={1019-8385},
     journal={Comm. Anal. Geom.},
      volume={9},
      number={4},
       pages={879\ndash 919},
}

\bib{Kotschwar2011}{article}{
      author={Kotschwar, Brett~L.},
       title={Ricci flow and the holonomy group},
     journal={J. Reine Angew. Math., to appear},
}

\bib{Kotschwar2010}{article}{
      author={Kotschwar, Brett~L.},
       title={Backwards uniqueness for the {R}icci flow},
        date={2010},
        ISSN={1073-7928},
     journal={Int. Math. Res. Not. IMRN},
      number={21},
       pages={4064\ndash 4097},
         url={http://dx.doi.org/10.1093/imrn/rnq022},
}

\bib{List2008}{article}{
      author={List, Bernhard},
       title={Evolution of an extended {R}icci flow system},
        date={2008},
        ISSN={1019-8385},
     journal={Comm. Anal. Geom.},
      volume={16},
      number={5},
       pages={1007\ndash 1048},
}

\bib{Lott2007}{article}{
      author={Lott, John},
       title={On the long-time behavior of type-{III} {R}icci flow solutions},
        date={2007},
        ISSN={0025-5831},
     journal={Math. Ann.},
      volume={339},
      number={3},
       pages={627\ndash 666},
         url={http://dx.doi.org/10.1007/s00208-007-0127-x},
}

\bib{Lott2010}{article}{
      author={Lott, John},
       title={Dimensional reduction and the long-time behavior of {R}icci
  flow},
        date={2010},
        ISSN={0010-2571},
     journal={Comment. Math. Helv.},
      volume={85},
      number={3},
       pages={485\ndash 534},
         url={http://dx.doi.org.ezproxy.lib.utexas.edu/10.4171/CMH/203},
}

\bib{LottSesum2011}{article}{
      author={Lott, John},
      author={Sesum, Natasa},
       title={Ricci flow on three-dimensional manifolds with symmetry},
     journal={Comm. Math. Helv., to appear},
}

\bib{Mackenzie2005}{book}{
      author={Mackenzie, Kirill C.~H.},
       title={General theory of {L}ie groupoids and {L}ie algebroids},
      series={London Mathematical Society Lecture Note Series},
   publisher={Cambridge University Press},
     address={Cambridge},
        date={2005},
      volume={213},
        ISBN={978-0-521-49928-3; 0-521-49928-3},
}

\bib{Moerdijk2003}{book}{
      author={Moerdijk, I.},
      author={Mr{\v{c}}un, J.},
       title={Introduction to foliations and {L}ie groupoids},
      series={Cambridge Studies in Advanced Mathematics},
   publisher={Cambridge University Press},
     address={Cambridge},
        date={2003},
      volume={91},
        ISBN={0-521-83197-0},
}

\bib{Muller2012}{article}{
      author={M\"{u}ller, Reto},
       title={Ricci flow coupled with harmonic map flow},
        date={2012},
     journal={Ann. Sci. Ec. Norm. Sup. (4)},
      volume={45},
      number={1},
       pages={101\ndash 142},
}

\bib{Perelman2002}{article}{
      author={Perelman, Grisha},
       title={The entropy formula for the {R}icci flow and its geometric
  applications},
        date={2002},
      eprint={arXiv:math/0211159},
}

\bib{Streets2007}{book}{
      author={Streets, Jeffrey~D.},
       title={Ricci {Y}ang-{M}ills flow},
        date={2007},
        ISBN={978-1109-93091-7},
  url={http://gateway.proquest.com/openurl?url_ver=Z39.88-2004&rft_val_fmt=info:ofi/fmt:kev:mtx:dissertation&res_dat=xri:pqdiss&rft_dat=xri:pqdiss:3254891},
        note={Thesis (Ph.D.)--Duke University},
}

\bib{Tran2012}{article}{
      author={Tran, Hung},
       title={Harnack estimates for {R}icci flow on a warped product},
        date={2012},
      eprint={arXiv:1211.6448},
}

\bib{Williams2010a}{article}{
      author={Williams, Michael~Bradford},
       title={Explicit {R}icci solitons on nilpotent {L}ie groups},
     journal={J. Geom. Anal., to appear},
}

\bib{Williams2012}{article}{
      author={Williams, Michael~Bradford},
       title={Stability of solutions of certain extended {R}icci flow systems},
        date={2012},
        note={Preprint},
}

\bib{Young2008}{book}{
      author={Young, Andrea},
       title={Modified {R}icci flow on a principal bundle},
        date={2008},
        ISBN={978-0549-70609-0},
  url={http://gateway.proquest.com/openurl?url_ver=Z39.88-2004&rft_val_fmt=info:ofi/fmt:kev:mtx:dissertation&res_dat=xri:pqdiss&rft_dat=xri:pqdiss:3320004},
        note={Thesis (Ph.D.)--The University of Texas at Austin},
}

\end{biblist}
\end{bibdiv}

\end{document}